\newcommand{\va}{l}
\newcommand{\Deg}{N}
\newcommand{\noz}{K}
\newcommand{\Tree}{\operatorname{T}}
\renewcommand{\epsilon}{\varepsilon}
\newcommand{\Vol}{\operatorname{Vol}}
\newcommand{\area}{\operatorname{area}}
\newcommand\N{\mathbb N}
\newcommand\Z{\mathbb Z}
\newcommand{\T}{\mathbb T}
\newcommand{\R}{{\mathbb R}}
\newcommand{\reals}{{\mathbb R}}
\newcommand\C[1]{{\mathbb C}^{#1}}
\newcommand{\cx}{{\mathbb C}}
\newcommand{\CP}{{\mathbb C}\!\operatorname{P}^1}
\newcommand{\cH}{{\mathcal H}}
\newcommand{\cQ}{{\mathcal Q}}
\newcommand{\cP}{\mathcal P}
\newcommand{\cZ}{\mathcal{Z}}
\newlength{\halfbls}\setlength{\halfbls}{.5\baselineskip}
\def\@ifundefined#1#2#3%
\theoremstyle{plain} 
\newtheorem{theorem}{Theorem}[section]
\newtheorem*{NoNumberTheorem}{Theorem}
\newtheorem{lemma}[theorem]{Lemma}
\newtheorem{corollary}[theorem]{Corollary}
\theoremstyle{definition} 
\newtheorem{definition}[theorem]{Definition}
\newtheorem{Convention}[theorem]{Convention}
\newtheorem{remark}[theorem]{Remark}
\numberwithin{equation}{section}
\newlength{\figboxwidth}
\begin{document}

\title{Counting generalized Jenkins--Strebel differentials}

\date{May 21, 2013}

\begin{abstract}
We  study  the combinatorial geometry of ``lattice'' Jenkins--Strebel
differentials with simple zeroes and simple poles on $\CP$ and of the
corresponding   counting   functions.   Developing   the  results  of
M.~Kontsevich~\cite{Kontsevich}  we   evaluate  the  leading  term  of  the  symmetric
polynomial  counting  the number of such ``lattice'' Jenkins--Strebel
differentials  having  all  zeroes  on  a single singular layer. This
allows   us   to   express   the   number   of   general  ``lattice''
Jenkins--Strebel  differentials  as  an appropriate weighted sum over
decorated trees.

The  problem of counting Jenkins--Strebel differentials is equivalent
to  the problem of counting pillowcase covers, which serve as integer
points  in  appropriate local coordinates on strata of moduli spaces of meromorphic quadratic differentials.
This  allows  us  to  relate  our  counting  problem  to calculations of volumes of these strata . A very explicit expression for the volume of any stratum
of  meromorphic  quadratic  differentials  recently  obtained  by the
authors~\cite{AEZ:pillowcase:covers} leads to an interesting combinatorial identity for our
sums over trees.
\end{abstract}

\author{Jayadev~S.~Athreya}
\address{
Department of Mathematics,
University of Illinois,
Urbana, IL~61801 USA}
\email{jathreya@illinois.edu}

\author{Alex Eskin}
\address{
Department of Mathematics,
University of Chicago,
Chicago, Illinois 60637, USA\\
}
\email{eskin@math.uchicago.edu}

\author{Anton Zorich}
\address{
Institut de math\'ematiques de Jussieu,
Institut Universitaire de France,
Universit\'e Paris 7, France}
\email{zorich@math.jussieu.fr}

\thanks{
J.S.A. is partially supported by NSF grants DMS 0603636
and DMS 0244542.
A.E. is partially supported by NSF grants DMS 0244542 and DMS 0604251.
A.Z. is partially supported by the program PICS of CNRS and by ANR ``GeoDyM''
}

\maketitle

\setcounter{tocdepth}{2} 
\tableofcontents


\section{Introduction}
\label{intro}

\subsection{Counting pillowcase covers}
\label{subsec:counting:pillowcase:covers}

A  geometric  approach  to  volume  computation for the strata in the
moduli  spaces  of  Abelian  or  quadratic  differentials consists in
counting   \textit{square-tiled   surfaces}   or   \textit{pillowcase
covers},            see~\cite{Eskin:Okounkov},            \cite{EO2},
\cite{Eskin:Okounkov:Pandharipande}, \cite{Zorich:volumes}.
A  pillowcase  cover  $\hat\cP\to\CP$  is a ramified cover over $\CP$
branched  over  four  points. Define a flat metric on $\CP$ such that
the      resulting     \textit{pillowcase     orbifold}     as     in
Figure~\ref{fig:square:pillow}
\begin{figure}[htb]
   %
   %
\includegraphics{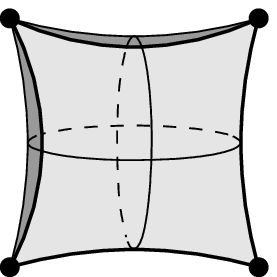}
\vspace{20pt}
\caption{
\label{fig:square:pillow}
Pillowcase orbifold $\cP$.}
\end{figure}
is glued from two squares of size $1/2 \times 1/2$. Choosing the four
corners  of  the pillowcase $\cP$ as the four ramification points, we
get   an   induced   square  tiling  of  the  pillowcase  cover,  see
Figure~\ref{fig:global:graph:and:layers}.
\begin{figure}[htb]
\includegraphics{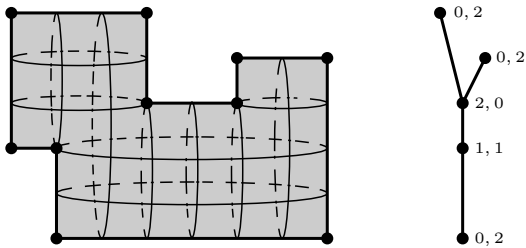}
\begin{picture}(0,0)(0,0)
\put(75,-6){\tiny $0,2$}
\put(91,-23){\tiny $0,2$}
\put(83,-40.5){\tiny $2,0$}
\put(83,-57.5){\tiny $1,1$}
\put(83,-91.5){\tiny $0,2$}
\end{picture}
\vspace{92pt}
\caption{
\label{fig:global:graph:and:layers}
Pillowcase  cover  $\hat\cP$  and  associated  decorated tree $\Tree$.
}
\end{figure}
The  flat  structure  on  the  pillowcase  $\cP$  corresponds  to the
meromorphic  quadratic  differential $\psi_0=(dz)^2$ on $\cP=\T/\pm$,
where  $\T=\C{}/(\Z\oplus  i\Z)$. The quadratic differential $\psi_0$
has  four  simple  poles  at  the  corners of the pillow and no other
singularities.  We  shall see that the induced quadratic differential
$\psi=\pi^\ast  \psi_0$  on $\hat\cP$ defines an integer point (in appropriate local coordinates) in the
ambient stratum of meromorphic quadratic differentials.

In  this paper we want to count the number of nonisomorphic connected
pillowcase  covers  $\hat\cP$  of  degree  at  most $\Deg$ having the
following  ramification  pattern. All ramification points are located
over  the corners of the pillowcase. All preimages of the corners are
ramification points of degree two with exception for $K$ ramification
points  of degree three and for $K+4$ unramified points. For example,
the  pillowcase cover in Figure~\ref{fig:global:graph:and:layers} has
$K=3$  ramification  points  of  degree  three and $K+4=7$ unramified
points.   We   do  not  specify  how  the  projections  of  $K+(K+4)$
distinguished  points are distributed between the four corners of the
pillowcase $\cP$.

Our  restriction  on the ramification data implies that the quadratic
differential  $\psi$  has  exactly  $K$ simple zeroes (located at the
ramification  points  of  degree  three); it has exactly $K+4$ simple
poles (located at $K+4$ nonramified preimages of the corners), and it
has  no  other  zeroes  or poles. In particular, the pillowcase cover
$\hat\cP$ has genus zero.

In  order  to  count pillowcase covers we note that if $\hat\cP$ is a
pillowcase cover, it can be decomposed into horizontal cylinders with
integer widths, with zeros and poles lying on the boundaries of these
cylinders,   see  Figure~\ref{fig:global:graph:and:layers}.  We  call
these   boundaries  \textit{singular  layers}.  Each  singular  layer
defines  a  connected  graph  with  a certain  number  $m$ of trivalent
vertices,  a certain  number  $n$  of  univalent  vertices, and with no
vertices  of  any  other  valence. Actually, it is more convenient to
consider  the  singular  layer as a \textit{ribbon graph} by taking a
small  tubular neighborhood of the singular layer inside the surface.
The graph is \textit{metric}: all edges have certain lengths measured
by  means of the flat structure. Since the length of the sides of each square of the
tiling  of  the  pillowcase  cover is $1/2$, and the vertices of each
singular  layer  are  located  at  the  vertices  of the squares, the
lengths  of  all  edges  of our graph are half-integer. Note that the
number  $l$ of cylinders adjacent to a layer is expressed in terms of
the  number  $m$  of  zeroes  and  the  number  $n$  of  poles on the
corresponding  layer  as $l=\frac{(m-n)}{2} +2$. Thus, by topological
reasons $m-n+2$ is necessarily a nonnegative even number.
\begin{figure}[htb]
\includegraphics{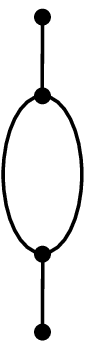}
\includegraphics{Gamma_1.eps}
\includegraphics{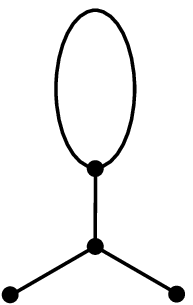}
\includegraphics{Gamma_2.eps}
\includegraphics{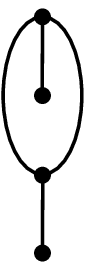}
\begin{picture}(0,0)(0,0)
\begin{picture}(0,0)(0,0)
\put(-170,-98){$\Gamma_{1,1}$}
\put(-148,-54.5){$1$}
\put(-166,-54.5){$l_1$}
\put(-132.5,-54.5){$l_2$}
\put(-144.5,-21){$l_3$}
\put(-144.5,-88){$l_4$}
\put(-166,-21){$2$}
\end{picture}
\begin{picture}(0,0)(-62,0)
\put(-170,-98){$\Gamma_{1,2}$}
\put(-148,-54.5){$2$}
\put(-166,-21){$1$}
\end{picture}
\begin{picture}(0,0)(-141,0)
\put(-166,-98){$\Gamma_{2,1}$}
\put(-160.5,-41){$1$}
\put(-176,-21){$2$}
\end{picture}
\begin{picture}(0,0)(-228,0)
\put(-166,-98){$\Gamma_{2,2}$}
\put(-160.5,-41){$2$}
\put(-176,-21){$1$}
\end{picture}
\begin{picture}(0,0)(-288,0)
\put(-166,-98){$\Gamma_3$}
\put(-149,-66){$1$}
\put(-161,-28){$2$}
\end{picture}
\end{picture}
\vspace{100pt}
\caption{
\label{fig:graph1}
The  list  of  all  connected  ribbon  graphs  with labelled boundary
components  having  $m$ zeros (i.e. $m=2$ trivalent vertices) and $2$
poles (i.e. $n=2$ univalent vertices).
}
\end{figure}

Developing  the techniques of M.~Kontsevich from~\cite{Kontsevich}, we
find  a  formula  for  the  following  counting  function.  Given $l$
positive  integer  numbers  $w_1,  \dots, w_l$ we count the number of
ways  to  join  $l$ cylinders of widths $w_1, \dots, w_l$ together by
means  of  a connected half-integer ribbon graph having $m$ trivalent
and   $n$   univalent   vertices;   see  Figure~\ref{fig:graph1}.

\begin{theorem}
\label{theorem:local:poly}
Let   $m$   and   $n$   be  nonnegative  integer  numbers  not  equal
simultaneously  to  zero  such  that  $m-n+2$  is  a nonnegative even
number.  Let  $F_{m,n}(w_1,\dots,w_l)$, where $l=\frac{(m-n)}{2} +2$,
be  the  number  of  ways  to  attach $l$ cylinders of integer widths
$w_1,\dots,w_l$  to all possible layers containing $m$ zeroes and $n$
poles,  in  such  way  that  all  edges  of  the  resulting graph are
half-integer. Up to the lower order terms one has
\begin{equation}
\label{eq:local:poly}
F_{m,n}(w_1,\dots,w_l) =
\frac{m!}{\left(\frac{(m+n)}{2} -1)\right)!}
\sum_{\substack{b_1,\ldots b_l\\ \sum b_i = \frac{(m+n)}{2} -1}}
\binom{\frac{(m+n)}{2} -1}{b_1, \ldots b_l}^2 \cdot
\prod_{i=1}^l w_i^{2b_i}\,,
\end{equation}
\end{theorem}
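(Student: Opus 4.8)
The plan is to follow Kontsevich's strategy for computing volumes of cells of metric ribbon graphs, adapted to the presence of simple poles. Write $d=\frac{m+n}{2}-1$, so the number of edges is $E=\frac{3m+n}{2}$ and, since the ribbon graph is planar ($V-E+l=2$, so genus $0$), the perimeter map has fibers of dimension $E-l=2d$. First I would express $F_{m,n}$, up to lower order terms, as a sum over combinatorial types. For each connected ribbon graph $G$ with $m$ trivalent vertices, $n$ univalent vertices and $l$ labelled boundary components, the half-integer metrics realizing prescribed perimeters $w_1,\dots,w_l$ are the points $\ell\in(\tfrac12\Z_{>0})^{E}$ with $A_G\,\ell=w$, where $A_G$ is the face--edge incidence matrix, whose columns all sum to $2$ (each edge borders two sides). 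As $w\to\infty$ the number of such points is asymptotic to a constant times the Euclidean volume $\Vol_G(w)$ of $\{\ell\in\R_{>0}^{E}:A_G\ell=w\}$, a homogeneous polynomial of degree $2d$ in $w$. Summing over $G$ isolates the claimed top-degree term, relegating discreteness corrections to lower order and the $|\Aut G|$ bookkeeping to the overall constant.

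Next I would apply Kontsevich's Laplace-transform identity. Writing $p_i=\sum_e (A_G)_{ie}\ell_e$ for the perimeter of face $i$, one has $\sum_i\lambda_i p_i=\sum_e(\lambda_{\alpha(e)}+\lambda_{\beta(e)})\ell_e$, where $\alpha(e),\beta(e)$ are the two faces bordering $e$; substituting, the Gaussian integral factorizes, giving
\[
\int_{\R_{>0}^{l}}\Vol_G(w)\,e^{-\langle\lambda,w\rangle}\,dw=c_G\prod_{e}\frac{1}{\lambda_{\alpha(e)}+\lambda_{\beta(e)}}
\]
for a Jacobian constant $c_G$. Hence the Laplace transform of the leading term of $F_{m,n}$ is $\Phi(\lambda):=\sum_G c_G\prod_e(\lambda_{\alpha(e)}+\lambda_{\beta(e)})^{-1}$. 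On the other side, transforming the proposed answer by $\int_0^\infty w^{2b}e^{-\lambda w}\,dw=(2b)!/\lambda^{2b+1}$, together with $\binom{2b}{b}=(2b)!/(b!)^2$ and $\sum_b\binom{2b}{b}x^b=(1-4x)^{-1/2}$, yields the target
\[
\frac{m!}{d!}\,(d!)^2\Bigl(\prod_{i=1}^l\frac{1}{\lambda_i}\Bigr)\,[t^{d}]\prod_{i=1}^{l}\bigl(1-4t\lambda_i^{-2}\bigr)^{-1/2}.
\]
So it suffices to prove that $\Phi$ equals this expression.

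The crux is evaluating $\Phi$, and this is where I expect the main difficulty to lie. The new feature compared with Kontsevich's purely trivalent computation is the univalent vertices, i.e.\ the simple poles: the stub edge at a univalent vertex is bordered twice by a single face $i$, so it contributes a factor $\frac{1}{2\lambda_i}$ and otherwise merely decorates that face. I would therefore organize the sum by first fixing the distribution of the $n$ stubs among the faces and then summing the trivalent core, in the style of Kontsevich's main combinatorial identity in genus $0$. The stub insertions replace the local double-factorial face weights of the pure trivalent model by the central-binomial weights $\binom{2b_i}{b_i}$, producing the per-face generating function $(1-4t\lambda_i^{-2})^{-1/2}$; the global coupling $[t^{d}]$ records $\sum_i b_i=d$ (the total number of zeros), and the prefactor $\frac{m!}{d!}$ reflects the assembly of the faces into a connected planar graph on $m$ trivalent vertices and the passage between $|\Aut G|$-weighted and honest counts. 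Carrying out this summation---correctly folding the poles into Kontsevich's identity and tracking the labellings so as to land precisely on $\frac{m!}{d!}$---is the technical heart of the argument.

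Finally I would invert the Laplace transform term by term. Using $\lambda_i^{-(2b_i+1)}\mapsto w_i^{2b_i}/(2b_i)!$, the expression $\frac{m!}{d!}(d!)^2\prod_i\binom{2b_i}{b_i}\lambda_i^{-(2b_i+1)}$ (summed over $\sum_i b_i=d$) transforms back into $\frac{m!}{d!}\binom{d}{b_1,\dots,b_l}^2\prod_i w_i^{2b_i}$. Matching the remaining overall constant---the factor $2^{2d}$ coming from the half-integer lattice density together with the Jacobian constants $c_G$---then recovers \eqref{eq:local:poly} as the top-degree term, with everything of lower degree absorbed into the lower order terms allowed by the statement.
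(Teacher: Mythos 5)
Your setup is sound and matches the paper's framework: you correctly reduce the count to Euclidean volumes of the polytopes $\{A_G\ell=w\}$, pass to Laplace transforms so that each graph contributes $\prod_e(\lambda_{\alpha(e)}+\lambda_{\beta(e)})^{-1}$ (with stub edges at univalent vertices giving $\tfrac{1}{2\lambda_i}$), identify Kontsevich's theorem as the essential input, and your Laplace transform of the target formula is computed correctly (it does specialize to Kontsevich's $\widehat F_{m,0}$ when $n=0$, via $(2k-1)!!=(2k)!/(2^k k!)$). But the proof has a genuine gap at exactly the point you flag as ``the technical heart'': you never establish the identity $\Phi(\lambda)=m!\,d!\,\bigl(\prod_i\lambda_i^{-1}\bigr)[t^d]\prod_i(1-4t\lambda_i^{-2})^{-1/2}$. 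The assertion that distributing the $n$ stubs among the faces ``replaces the double-factorial weights by central-binomial weights'' is a restatement of the conclusion, not an argument. The combinatorics being swept under the rug is nontrivial: a stub attaches to the interior of an edge of the core, splitting a factor $(\lambda_i+\lambda_j)^{-1}$ into $(\lambda_i+\lambda_j)^{-2}(2\lambda_j)^{-1}$, several stubs can land on the same edge on either side, and attaching a stub also creates a new trivalent vertex, so the ``trivalent core'' obtained by deleting all stubs has only $m-n$ trivalent vertices. Moreover, for $m=n$ and $m=n-2$ the core degenerates entirely (e.g.\ $F_{0,2}$ and $F_{1,1}$ have no trivalent core), so your reduction to Kontsevich's identity does not even get off the ground in those cases.

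The paper closes this gap with a recursion rather than a one-shot summation: it proves (Lemma~\ref{induction}) that
$\widehat F_{m+1,n+1}=2(m+1)\sum_{i}\frac{-1}{\lambda_i}\frac{\partial}{\partial\lambda_i}\widehat F_{m,n}$,
by checking graph-by-graph that summing over all ways to insert one new stub into face $j$ of a fixed ribbon graph $\Gamma$ reproduces the action of $\frac{-2}{\lambda_j}\frac{\partial}{\partial\lambda_j}$ on $\widehat F_\Gamma$ (the edges with face $j$ on both sides counted twice). Iterating this $n$ times from the base cases $F_{m-n,0}$ (Kontsevich), $F_{1,1}=F_{0,2}=1$ and simplifying the multinomials yields \eqref{eq:local:poly}. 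If you want to salvage your direct approach, you would need to prove a stub-insertion identity of essentially this strength anyway; as written, the central claim is conjectured, not proved.
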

Theorem~\ref{eq:local:poly} is of independent interest; it is
is  proved  in  \S\ref{subsec:recurrense}. To elaborate certain
geometric   intuition  helpful  in  manipulating  geometric  counting
functions  we compute in \S\ref{subsubsec:example} by hands the
function  $F_{2,2}(w_1,w_2)$  corresponding  to  ribbon  graphs  from
Figure~\ref{fig:graph1}.

Having  studied  the  enumerative  geometry of singular layers let us
return  to  global pillowcase covers. Suppose there are $k$ cylinders
of  width $w_i$ and height $h_i$ respectively. Since the flat surface
is  a  topological  sphere,  there  are  $k+1$ singular layers in the
decomposition  of $\hat\cP$. The total number of pillowcase covers of
degree $\Deg$ with this type of decomposition can be written as
\begin{equation}
\label{eq:eqn:k:layer}
2^k
\sum_{\substack{w\cdot h \le\Deg\\
w_i,h_i\in\N}}
w_1\cdot w_2\dots \cdot w_k\cdot
\prod_{i=1}^{k+1} F_{m_i,n_i}(w_1, \ldots, w_k)\,,
\end{equation}
where  $F_{m_i,n_i}$  is  a  function counting the number of ways the
cylinders  of  width $w_i$ can be glued at the layer $i$, and $w\cdot
h:=\sum_{i=1}^k  w_i  h_i$.  The factor $(2w_1) (2w_2) \ldots (2w_k)$
arises  from  the  possibility  of  twisting each cylinder around the
waist curve; see \S\ref{sec:subsec:jenkins} for more details.

Representing  each  singular layer by a vertex of an associated graph
$\Tree$  as  in  Figure~\ref{fig:global:graph:and:layers},  and every
cylinder  by  an  edge  of such graph, we encode the decomposition of
$\hat\cP$  into  cylinders  by a global graph $\Tree$. We also record
the information on the number $m_i$ of zeroes and the number $n_i$ of
simple  poles  located  at  each  layer  $i$. This extra structure is
referred   to   as   a  \textit{decoration}.  Since  $\hat\cP$  is  a
topological   sphere,   the  graph  $\Tree$  is  a  tree.  Taking  an
appropriate   sum   of  expressions~\eqref{eq:eqn:k:layer}  over  all
decorated  trees  we  get the leading term of the asymptotics for the
number  of  pillowcase covers (see \S\ref{sec:subsec:total} and
Theorem~\ref{th:volume:through:pillows} for exact statements).

On   the   other   hand,   we   have   the  following  recent  result
from~\cite{AEZ:pillowcase:covers}:
\begin{NoNumberTheorem}
$$
\Vol\cQ_1(1^K,-1^{K+4})=\frac{\pi^{2K+2}}{2^{K-1}}\,.
$$
\end{NoNumberTheorem}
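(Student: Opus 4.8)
The plan is to realize the left-hand side as the properly normalized leading coefficient of the counting function for pillowcase covers, and then to evaluate that coefficient using the local formula \eqref{eq:local:poly} together with the cylinder/tree decomposition. First I would record the dimension count: the stratum $\cQ(1^K,-1^{K+4})$ of meromorphic quadratic differentials on $\CP$ has complex dimension $d=2K+2$, which already matches the exponent of $\pi$ on the right-hand side. By the lattice-point interpretation of the Masur--Veech measure, integer points in the period coordinates are exactly the pillowcase covers of the prescribed ramification type, and the area of a cover equals, up to the fixed normalization coming from the $\tfrac12\times\tfrac12$ tiling, its degree. Hence, writing $\mathcal N(\Deg)$ for the number of connected covers of degree at most $\Deg$, one has
\begin{equation*}
\Vol\cQ_1(1^K,-1^{K+4}) = 2d\cdot\lim_{\Deg\to\infty}\frac{\mathcal N(\Deg)}{\Deg^{\,d}},
\end{equation*}
up to the standard normalization constant for quadratic differentials, which I would fix once and for all on the base stratum $K=0$.

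Next I would insert the tree expansion. Decomposing each cover into horizontal cylinders turns $\mathcal N(\Deg)$ into a sum over decorated trees $\Tree$ of the quantities \eqref{eq:eqn:k:layer}, with each local factor $F_{m_i,n_i}$ given explicitly by \eqref{eq:local:poly}. To extract the leading term I would carry out the height summation in \eqref{eq:eqn:k:layer}: for fixed widths the constraint $w\cdot h\le\Deg$ cuts out a simplex, and because the summand $(2w_1)\cdots(2w_k)\prod_i F_{m_i,n_i}$ is a polynomial in the widths in which every $w_j$ occurs to a strictly positive power, summing a monomial $\prod_j w_j^{\alpha_j}$ against the heights produces, to leading order, $\Deg^{\,2K+2}\prod_{j=1}^{k}\zeta(\alpha_j+1)$ times an explicit rational constant. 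A short bookkeeping check shows $\sum_j(\alpha_j+1)=2K+2$ and that every decorated tree feeds into the same top power $\Deg^{\,2K+2}$, so the leading coefficient is a finite weighted sum over all decorated trees of products of zeta values and of the squared multinomial coefficients of \eqref{eq:local:poly}.

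The main obstacle is the evaluation of this tree sum. Two things must happen: the contributions of the odd zeta values must cancel across the sum, so that only $\zeta$ at even arguments (rational multiples of powers of $\pi$) survive and the answer is a pure power of $\pi$; and the surviving rational combination must collapse to $1/2^{K-1}$. I would attack both by induction on $K$, exploiting the factorized shape $\frac{\pi^{2K+2}}{2^{K-1}}=2\pi^2\bigl(\frac{\pi^2}{2}\bigr)^{K}$ of the target: the base case $K=0$ is the single-cylinder computation on $\cQ_1(-1^4)$, where $\sum_{w_1h_1\le\Deg}w_1\sim\tfrac12\zeta(2)\,\Deg^2$ already manufactures the factor $2\pi^2$, while the inductive step amounts to showing that passing from $K$ to $K+1$ (adding one simple zero and one simple pole) multiplies the weighted tree sum by exactly $\frac{\pi^2}{2}$. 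Establishing this ratio -- tracking how the squared multinomials of \eqref{eq:local:poly} recombine when a vertex is grafted and a fresh height-zeta factor is introduced -- is precisely the combinatorial identity advertised in the abstract and carries essentially all of the difficulty. Alternatively, one may take the volume as computed by an independent direct method in~\cite{AEZ:pillowcase:covers} and read the present derivation backwards, as a proof of that identity rather than of the volume.
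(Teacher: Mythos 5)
Your reduction of the volume to a weighted sum over decorated trees of zeta values and squared multinomial coefficients is sound and is exactly the content of \S\ref{sec:subsec:total}, but the step that would actually prove the stated formula --- your claimed induction on $K$, in which adding one zero--pole pair multiplies the tree sum by exactly $\pi^2/2$ --- is not established, and you yourself concede it ``carries essentially all of the difficulty.'' There is no evident mechanism for such a recursion: passing from $K$ to $K+1$ does not act vertex-by-vertex on a fixed tree but creates new tree topologies, allows the number of vertices to grow, and redistributes the decorations; the paper's own tables show the total $\pi^{2K+2}/2^{K-1}$ arising from sums such as $\frac{4}{27}+\frac{2}{9}+\frac{7}{54}=\frac12$ spread over structurally different trees, with no visible factorization. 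In fact the paper never proves this combinatorial identity directly. The displayed theorem is quoted verbatim from~\cite{AEZ:pillowcase:covers}, where it is obtained by entirely different, analytic means (determinant of the Laplacian, principal boundary of the moduli spaces, Siegel--Veech constants, Lyapunov exponents), and the logic of \S\ref{sec:subsec:total} runs in the opposite direction from yours: the known volume is used as \emph{input} to deduce the tree identity of Theorem~\ref{th:volume:through:pillows}, which is then only verified by hand for $K=1,2$. So your closing ``alternative'' --- cite \cite{AEZ:pillowcase:covers} and read the derivation backwards --- is not an alternative but the only route actually available; as an independent proof of the volume formula your proposal has a genuine gap at its central step.

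A secondary inaccuracy: no cancellation of odd zeta values is needed. Each local polynomial $F_{m,n}$ is a polynomial in the squares $w_i^2$, so every monomial of $w_1\cdots w_k\prod_i F_{m_i,n_i}$ carries each width to an odd power $2a_j+1$, and Lemma~\ref{lemma:zeta:functions} then produces only $\zeta(2a_j+2)$, i.e.\ zeta at even arguments. That the coefficient of $\pi^{2K+2}$ is rational is therefore automatic; the entire difficulty is evaluating that rational number, which is precisely the step you leave open.
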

It    implies    the    main   combinatorial   identity   stated   in
Theorem~\ref{th:volume:through:pillows}.
\bigskip

The  formula for the volume $\Vol\cQ_1(1^K,-1^{K+4})$ (and, actually,
a  much  more  general  formula  for  the  volume  of  any stratum of
meromorphic  quadratic  differentials  with  at most simple poles) is
obtained  in~\cite{AEZ:pillowcase:covers}  in  a  very  indirect  way
through the analytic Riemann-Roch theorem, asymptotics of the determinant
of  the  Laplacian of the singular flat metric, principal boundary of
the moduli spaces, Siegel--Veech constants, and Lyapunov exponents of
the  Hodge bundle. The current paper develops a transparent geometric
approach.  We  have to admit that from purely pragmatic point of view
this natural geometric approach is, however, less efficient.

The   situation   with   counting   volumes   of  strata  of  Abelian
differentials  is somehow similar: the problem was solved by A.~Eskin
and    A.~Okounkov    in~\cite{Eskin:Okounkov}   using   methods   of
representation  theory  of the symmetric group, and developed further
in~\cite{EO2}    and   in~\cite{Eskin:Okounkov:Pandharipande}   using
techniques  of  quasimodular  forms.  A  straightforward  counting of
square-tiled  surfaces  works only for strata of small dimension, and
becomes   disastrously   complicated   when   the   dimension  grows,
see~\cite{Zorich:volumes}.  However, the technique elaborated in this
naive  geometric  approach to the study of square-tiled surfaces, and
the  ties to various related subjects proved to be extremely helpful.
For  example,  the  separatrix  diagrams  (analogs  of  ribbon graphs
representing   singular   layers)  were  used  as  one  of  the  main
instruments  in  classification~\cite{Kontsevich:Zorich} of connected
components  of  the  strata.  Multiple  zeta  values  which appear in
counting  square-tiled  surfaces  represented  by  certain  groups of
separatrix   diagrams,  seem  to  have  interesting  applications  to
representation theory.

We  believe  that an ample description of the enumerative geometry of
pillowcase covers combining direct geometric approach elaborated
in  the current paper, and the implicit analytic approach
from~\cite{AEZ:pillowcase:covers}   could   be  helpful  for  various
applications.
\subsection{Reader's guide}

In   \S\ref{sec:Elementary:geometry}   we   present  the  basic
background  material  on the natural volume element the moduli spaces
of             quadratic            differentials.            Namely,
in~\S\ref{ss:coordiantes:in:stratum}   we   introduce  the  canonical
\textit{cohomological      coordinates}      in      each     stratum
$\cQ(d_1,\dots,d_k)$  of  meromorphic quadratic differentials with at
most  simple  poles.  In~\S\ref{sec:subsec:notmalization} we define a
canonical  lattice  in these coordinates which determines the natural
linear        volume        element       in       the       stratum.
In~\S\ref{subsec:integer:points}  we  show  how volume calculation is
related to counting of lattice points.

The    original    part    of    the    paper    is    presented   in
\S\ref{sec:Jenkins}.
In~\S\ref{subsec:lattice:square:tiled:pillowcase} we show why lattice
points  in the stratum are represented by pillowcase covers which, in
view   of~\S\ref{subsec:integer:points},   explains  why  the  volume
calculation   is   equivalent  to  counting  the  pillowcase  covers.
In~\S\ref{sec:subsec:jenkins}   we   discuss   in  more  details  the
functions   $F_{n,m}$  from  Theorem~\ref{theorem:local:poly},  study
their elementary properties and prove formula~\eqref{eq:eqn:k:layer}.
We  consider in \S\ref{subsubsec:example} a particular case $F_{2,2}$
corresponding  to Figure~\ref{fig:graph1} as an example, for which we
perform       an       explicit       by      hand     computation.
In~\S\ref{subsubsec:kontsevich}  we  obtain  a general expression for
$F_{m,0}$  as  a  corollary from Kontsevich's Theorem~\cite{Kontsevich}. We use this
expression      as     a     base     of     recurrence     developed
in~\S\ref{subsec:recurrense}, where we express $F_{m+1,n+1}$ in terms
of    $F_{m,n}$.    This    recurrence    allows    us    to    prove
in~\S\ref{subsec:recurrense}        Theorem~\ref{theorem:local:poly}.
Finally,  in~\S\ref{sec:subsec:total}  we  compute  the  sum over all
decorated   trees   and   prove   the   main   identity   stated   in
Theorem~\ref{th:volume:through:pillows}.  We  illustrate this Theorem
performing  a  detailed  computation for the strata $\cQ(1,-1^5)$ and
$\cQ(1^2,-1^6)$   in~\S\ref{subsec:1:5}   and   in~\S\ref{subsec:2:6}
respectively.

\medskip
\noindent\textbf{Acknowledgments.}
The  authors  are  happy  to  thank  IHES,  IMJ,  IUF,  MPIM, and the
Universities   of   Chicago,  of  Illinois  at  Urbana-Champaign,  of
Rennes~1,  and  of  Paris~7 for hospitality during the preparation of
this paper. We thank the anonymous referee for their careful  reading
of the paper and helpful suggestions.

\section{Canonical volume element in the  moduli  space  of  quadratic
differentials}
\label{sec:Elementary:geometry}

\subsection{Coordinates in a stratum of quadratic differentials.}
\label{ss:coordiantes:in:stratum}
Consider a meromorphic quadratic differential $\psi$ having zeroes of
arbitrary  multiplicities  but  only simple poles on $\CP$. Let $P_1,
\ldots,  P_n$  be  its  singular  points  (zeros  and  simple poles).
Consider  the minimal branched double covering $p:\hat S\to \CP$ such
that   the   induced   quadratic  differential  $p^\ast\psi$  on  the
hyperelliptic  surface  $\hat  S$  is  already a square of an Abelian
differential $p^\ast\psi=\omega^2$.
\begin{figure}[htb]
%
%
%
\includegraphics{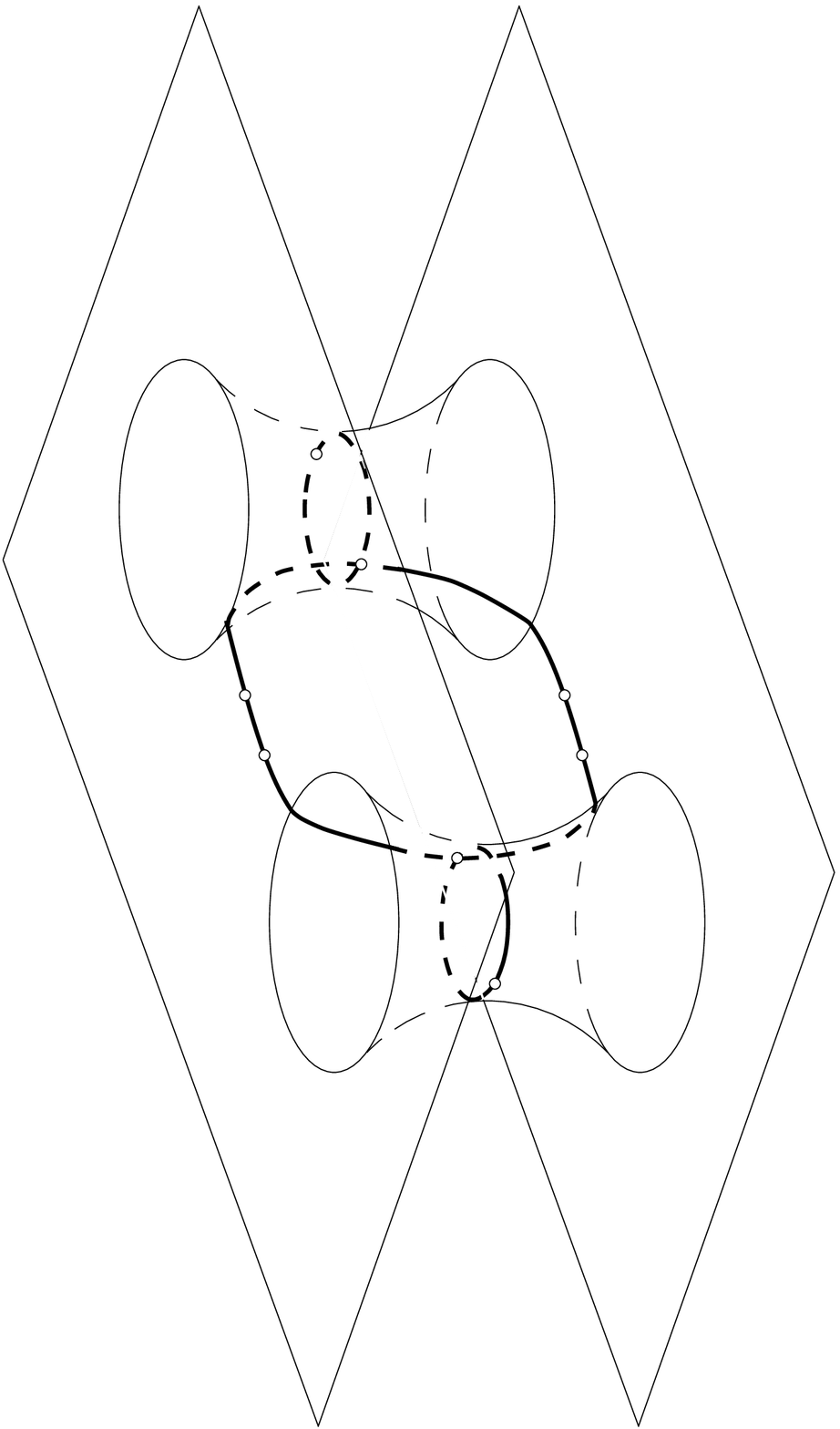}
\includegraphics{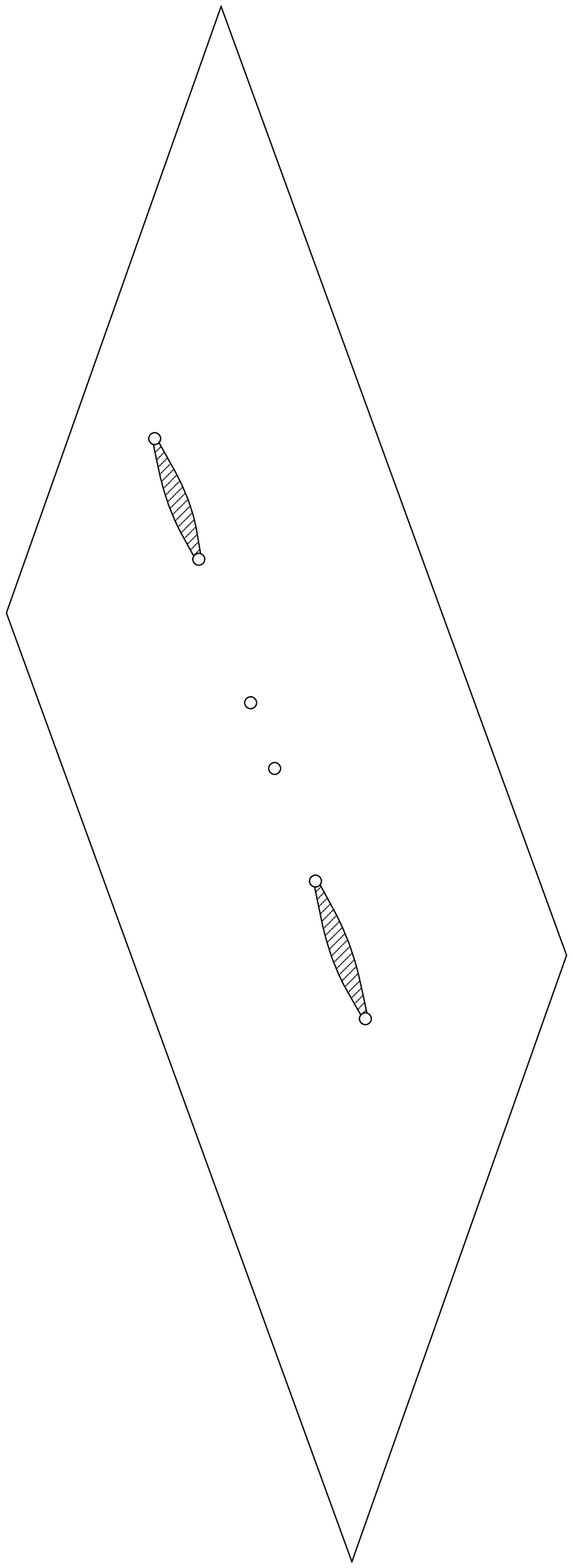}
\begin{picture}(0,0)(0,0)
\put(-53,-101){$\hat P_1$}
\put(-17,-88){$\hat P_2$}
\put(0,-58){$\hat P^+_i$}
\put(0,-118){$\hat P^-_i$}
\put(35,-65){$\hat P_{n-1}$}
\put(61,-59){$\hat P_n$}
\end{picture}
\begin{picture}(0,0)(0,0)
\put(-48,-223){$P_1$}
\put(-15,-213){$P_2$}
\put(3,-206){$P_i$}
\put(33,-194){$P_{n-1}$}
\put(65,-184){$P_n$}
\end{picture}
\vspace{240bp}
\caption{
\label{fig:hyperel}
Basis  of  cycles  in $H_1^-(\hat S,\{\hat P_1,\dots,\hat P_N\};\Z)$.
Note that the cycle corresponding to the very last slit is omitted.
}
\end{figure}

The  zeros  $\hat{P}_1,\ldots,\hat{P}_N$  of  the  resulting  Abelian
differential  $\omega$  correspond  to  the  zeros  of  $\psi$ in the
following  way:  every  zero  $P\in  \CP$ of $\psi$ of odd order is a
ramification  point  of  the  covering,  so it produces a single zero
$\hat{P}\in\hat  S$  of  $\omega$;  every zero $P\in\CP$ of $\psi$ of
even  order  is  a  regular point of the covering, so it produces two
zeros  $\hat{P}^+,\hat{P}^-\in\hat  S$ of $\omega$. Every simple pole
of  $\psi$ defines a branching point of the covering; this point is a
regular point of $\omega$.

Consider  the subspace $H_1^-(\hat S,\{\hat P_1,\dots,\hat P_N\};\Z)$
of  the relative homology of the cover with respect to the collection
of   zeroes  $\{\hat  P_1,\dots,\hat  P_N\}$  of  $\omega$  which  is
antiinvariant with respect to the induced action of the hyperelliptic
involution.  We  are  going to construct a basis in this subspace (in
complete  analogy  with  a  usual  basis  of  absolute  cycles  for a
hyperelliptic surface).

We  can  always  enumerate  the  singular  points $P_1,\dots, P_n$ of
$\psi$  in such a way that $P_n$ is a simple pole. Chose now a simple
oriented   broken   line   $P_1,\ldots,P_{n-1}$   on   $\CP$  joining
consecutively  all the singular points of $\psi$ except the last one.
For  every  arc $[P_i,P_{i+1}]$ of this broken line, $i=1,\dots,n-2$,
the  difference  of  their  two preimages defines a relative cycle in
$H_1^-(\hat  S,\{\hat P_1,\dots,\hat P_N\};\Z)$. By construction such
a   cycle   is   antiinvariant  with  respect  to  the  hyperelliptic
involution.  It  is immediate to see that the resulting collection of
cycles   forms   a   basis  in  $H_1^-(\hat  S,\{\hat  P_1,\dots,\hat
P_N\};\Z)$.

Note  that,  a  preimage  of a simple pole does not belong to the set
$\hat   P_1,\ldots,\hat   P_N$.   Thus,   a   preimage   of   an  arc
$[P_i,P_{i+1}]$ having a simple pole as one of the endpoints does not
define  a  cycle  in  $H_1(\hat  S,\{\hat  P_1,\dots,\hat P_N\};\Z)$.
However,  since  a  simple  pole  is  always  a  branching point, the
\textit{difference}  of  the  preimages  of  such  arc  is  already a
well-defined  relative  cycle  in  $H_1(\hat  S,\{\hat P_1,\dots,\hat
P_N\};\Z)$.

Let  $\cQ(d_1,\dots,d_n)$  be the ambient stratum for the meromorphic
quadratic   differential   $(\CP,\psi)$.   The  subspace  $H^1_-(\hat
S,\{\hat     P_1,\dots,\hat     P_N\};\C{})$    in    the    relative
\textit{cohomology}   antiinvariant   with  respect  to  the  natural
involution defines local coordinates in the stratum.

\subsection{Normalization of the volume element.}
\label{sec:subsec:notmalization}

For  any flat surface $S$ in any stratum $\cQ(d_1,\dots,d_k)$ we have
a  canonical  ramified  double  cover  $\hat{S}\to  S$  such that the
induced  quadratic  differential on the Riemann surface $\hat S$ is a
global  square of a holomorphic Abelian differential. We have seen in
\S\ref{ss:coordiantes:in:stratum} that the subspace
$H^1_-(\hat S,\{\hat{P}_1,\ldots,\hat{P}_N\};\C{})$
antiinvariant with respect to the induced action of the hyperelliptic
involution  on  relative cohomology provides local coordinates in the
corresponding     stratum     $\cQ(d_1,\dots,d_n)$    of    quadratic
differentials.     We     define    a    lattice    in    $H^1_-(\hat
S,\{\hat{P}_1,\ldots,\hat{P}_N\};\C{})$ as the subset of those linear
forms  which  take  values  in $\Z\oplus i\Z$ on $H^-_1(\hat S,\{\hat
P_1,\dots,\hat P_N\};\Z)$.

We  define  the  volume element $d\mu$ on $\cQ(d_1,\dots,d_k)$ as the
linear      volume      element      in      the     vector     space
$H^1_-(\hat{M}^2_g,\{\hat{P}_1,\ldots,\hat{P}_N\};\C{})$
normalized  in  such  way  that  the  fundamental domain of the above
lattice has volume $1$.

We warn the reader that for $N>1$ this lattice is a proper sublattice
of index $4^{N-1}$ of the lattice
$$
H^1_-(\widehat S,\{\widehat P_1, \dots, \widehat P_N\};\cx)\ \cap\
H^1(\widehat S,\{\widehat P_1, \dots, \widehat P_N\};\Z\oplus i\Z)\,.
$$
Indeed,  if a flat surface $S$ defines a lattice point for our choice
of  the  lattice,  then the holonomy vector along a saddle connection
joining  distinct  singularities might be half-integer. (However, the
holonomy  vector along any \textit{closed} saddle connection is still
always integer.)

The  choice  of one or another lattice is a matter of convention. Our
choice  makes  formulae  relating enumeration of pillowcase covers to
volumes simpler; see \S~\ref{sec:Jenkins}. Another advantage of
our choice is that the volumes of the strata $\cQ(d,-1^{d+4})$ and of
the  hyperelliptic  components of the corresponding strata of Abelian
differentials  are  the  same  (up to the factors responsible for the
numbering of zeroes and of simple poles).

\begin{Convention}
\label{con:area:1:2}  Similar to the case of Abelian differentials we
choose  a  real  hypersurface  $\cQ_1(d_1,\dots,d_k)$  in the stratum
$\cQ_1(d_1,\dots,d_k)$  of  flat  surfaces  of  fixed  area. We abuse
notation  by  denoting  by  $\cQ_1(d_1,\dots,d_k)$  the space of flat
surfaces  of  area $1/2$ (so that the canonical double cover has area
$1$).
\end{Convention}

The volume element $d\mu$ in the embodying space $\cQ(d_1,\dots,d_k)$
induces  naturally  a  volume  element  $d\mu_1$  on the hypersurface
$\cQ_1(d_1,\dots,d_k)$  in  the  following  way.  There  is a natural
$\cx^\ast$-action        on        $\cQ(d_1,\dots,d_k)$:       having
$\lambda\in\cx^\ast$ we associate to the flat surface $S=(\CP,q)$ the
flat  surface
\begin{equation}
\label{eq:Cstar:action}
\lambda\cdot S:=(\CP,\lambda^2\cdot q)\,.
\end{equation}
In  particular, we can represent any $S\in\cQ(d_1,\dots,d_k)$ as $S =
r  S_{(1)}$, where $r\in\reals_+$, and where $S_{(1)}$ belongs to the
``hyperboloid'': $S_{(1)}\in\cQ_1(d_1,\dots,d_k)$. Geometrically this
means that the metric on $S$ is obtained from the metric on $S_{(1)}$
by  rescaling  with  linear  coefficient  $r$. In particular, vectors
associated  to  saddle connections on $S_{(1)}$ are multiplied by $r$
to  give  vectors  associated  to corresponding saddle connections on
$S$.  It  means  also that $\area(S) = r^2\cdot\area(S_{(1)})=r^2/2$,
since  $\area(S_{(1)})  = 1/2$. We define the \textit{volume element}
$d\mu_1$    on    the   ``hyperboloid''   $\cQ_1(d_1,\dots,d_k)$   by
disintegration of the volume element $d\mu$ on $\cQ(d_1,\dots,d_k)$:
\begin{equation}
\label{eq:disintegration}
d\mu = r^{2n-1} \, dr\, d\mu_1\, ,
\end{equation}
where
$$
2n=\dim_\reals\cQ(d_1,\dots,d_k)=
2\dim_{\C{}}\cQ(d_1,\dots,d_k)=2(k-2)\,.
$$
Using  this  volume element we define the total \textit{volume of the
stratum} $\cQ_1(d_1,\dots,d_k)$:
\begin{equation}
\label{eq:int:cF:nu1}
\Vol\cQ_1(d_1,\dots,d_k):= \int_{\cQ_1(d_1,\dots,d_k)}d\mu_1\,.
\end{equation}

For     a     subset     $E\subset\cQ_1(d_1,\dots,d_k)$     we    let
$C(E)\subset\cQ_1(d_1,\dots,d_k)$ denote the ``cone'' based on $E$:
\begin{equation}
\label{eq:cone}
 C(E):=\{S=rS_{(1)}\,|\, S_{(1)}\in E,\ 0<r\le 1\}\,.
\end{equation}
Our  definition  of  the  volume element on $\cQ_1(d_1,\dots,d_k)$ is
consistent with the following normalization:
\begin{equation}
\label{eq:normalization}
\Vol(\cQ_1(d_1,\dots,d_k)) =
\dim_{\R{}} \cQ(d_1,\dots,d_k)\cdot\mu(C(\cQ_1(d_1,\dots,d_k))\,,
\end{equation}
where  $\mu(C(\cQ_1(d_1,\dots,d_k))$  is  the  total  volume  of  the
``cone''  $C(\cQ_1(d_1,\dots,d_k))\subset\cQ(d_1,\dots,d_k)$ measured
by means of the volume element $d\mu$ on $\cQ(d_1,\dots,d_k)$ defined
above.

\subsection{Reduction    of    volume    calculation    to   counting
lattice points}
\label{subsec:integer:points}

The   volume   of   a   stratum   $\cQ_1(d_1,\dots,d_k)$  is  defined
by~\eqref{eq:normalization} as
$$
\Vol\cQ_1(d_1,\dots,d_k) =
\dim_{\R{}} \cQ(d_1,\dots,d_k)\cdot\mu(C(\cQ_1(d_1,\dots,d_k))\,,
$$
where  $\mu(C(\cQ_1(d_1,\dots,d_k))$  is  the  total  volume  of  the
``cone''  $C(\cQ_1(d_1,\dots,d_k))\subset\cQ(d_1,\dots,d_k)$ measured
by means of the volume element $d\mu$ on $\cQ(d_1,\dots,d_k)$ defined
in  \S\ref{sec:subsec:notmalization}.  The  total volume of the
cone  $C(\cQ_1(d_1,\dots,d_k))$  is  the  limit  of the appropriately
normalized Riemann sums.

The  volume  element  $d\mu$ is defined as a linear volume element in
cohomological  coordinates,  normalized  by certain specific lattice.
Chose  a  positive  $\epsilon$ such that $1/\epsilon$ is integer, and
consider   a   sublattice   of   the   initial   lattice   of   index
$(1/\epsilon)^{\dim_{\R{}}  \cQ(d_1,\dots,d_k)}$  partitioning  every
side   of   the   initial   lattice  into  $1/\epsilon$  pieces.  The
corresponding  Riemann  sums  count  the  number  of  points  of  the
sublattices  which  get  inside  the cone. Thus, by definition of the
measure $\mu$ we get
\begin{multline*}
\mu(C(\cQ_1(d_1,\dots,d_k))=
\lim_{\epsilon\to 0}
\epsilon^{\dim_{\R{}} \cQ(d_1,\dots,d_k)}\cdot
\\
\big(\text{Number of points of the $\epsilon$-sublattice inside the cone }
C(\cQ_1(d_1,\dots,d_k))\big)\,.
\end{multline*}

We  assume that $1/\epsilon$ is integer. Note that a flat surface $S$
represents  a  point  of  the  $\epsilon$-lattice, if and only if the
surface     $(1/\epsilon)\cdot     S$     (in     the     sense    of
definition~\eqref{eq:Cstar:action}) represents a point of the integer
lattice.  Denoting  by  $C(\cQ_\Deg(d_1,\dots,d_k))$  the set of flat
surfaces   in  the  stratum  $\cQ(d_1,\dots,d_k)$  of  area  at  most
$\Deg/2$, and taking into consideration that
$$
\area((1/\epsilon)\cdot S)=1/\epsilon^2\cdot\area(S)
$$
we can rewrite the above relation as
\begin{multline}
\label{eq:lattice:points}
\mu(C(\cQ_1(d_1,\dots,d_k))=
\lim_{\Deg\to+\infty}
\Deg^{-\dim_{\C{}} \cQ(d_1,\dots,d_k)}\cdot
\\
\big(\text{Number of lattice points inside the cone }
C(\cQ_\Deg(d_1,\dots,d_k)\big)\,.
\end{multline}

\section{Counting generalized Jenkins--Strebel differentials}
\label{sec:Jenkins}

In  this  section  we pass to counting the pillowcase covers. We have
seen  in  \S\ref{subsec:integer:points}  that  volume  calculation is
equivalent     to     counting     the     lattice     points.     In
\S\ref{subsec:lattice:square:tiled:pillowcase}  we  discuss  in  more
details  the  \textit{pillowcase  covers}  and  show that counting of
lattice  points  is equivalent to the counting problem for pillowcase
covers.  Starting  from  section  \S\ref{sec:subsec:jenkins}  we work
exclusively   with   the   strata  $\cQ(1^K,-1^{K+4})$.

In~\S\ref{sec:subsec:jenkins}   we   discuss   in  more  details  the
functions   $F_{n,m}$  from  Theorem~\ref{theorem:local:poly},  study
their elementary properties and prove formula~\eqref{eq:eqn:k:layer}.
We  consider in \S\ref{subsubsec:example} a particular case $F_{2,2}$
corresponding  to Figure~\ref{fig:graph1} as an example, for which we
perform       an       explicit       (by      hand)   computation.
In~\S\ref{subsubsec:kontsevich}  we  obtain  a general expression for
$F_{m,0}$  as  a  corollary from a theorem of Kontsevich~\cite{Kontsevich}. We use this
expression      as     a     base     of     recursion    developed
in~\S\ref{subsec:recurrense}, where we express $F_{m+1,n+1}$ in terms
of    $F_{m,n}$.    This    recurrence relation    allows    us    to    prove
in~\S\ref{subsec:recurrense}        Theorem~\ref{theorem:local:poly}.
Finally,  in~\S\ref{sec:subsec:total}  we  compute  the  sum over all
decorated   trees   and   prove   the   main   identity   stated   in
Theorem~\ref{th:volume:through:pillows}.  In  \S\ref{subsec:1:5}  and
\S\ref{subsec:2:6} we illustrate our formula for concrete examples of
the strata $\cQ(1,-1^5)$ and $\cQ(1^2,-1^6)$ correspondingly.

\subsection{Lattice  points,  square-tiled  surfaces,  and pillowcase
covers}
\label{subsec:lattice:square:tiled:pillowcase}

Let  $\Lambda \subset \cx$ be a lattice, and let $\T^2 = \cx/\Lambda$
be the associated torus. The quotient
$$
\cP : = \T^2/\pm
$$
by  the  map  $z  \rightarrow  -z$ is known as the \textit{pillowcase
orbifold}.  It  is  a  sphere with four $(\Z/2)$-orbifold points (the
corners  of  the  pillowcase). The quadratic differential $(dz)^2$ on
$\T^2$  descends  to  a  quadratic differential on $\cP$. Viewed as a
quadratic  differential  on  the  Riemann sphere, $(dz)^2$ has simple
poles  at  corner  points. When the lattice $\Lambda$ is the standard
integer  lattice $\Z\oplus i\Z$, the flat torus $\T^2$ is obtained by
isometrically  identifying  the  opposite sides of a unit square, and
the  pillowcase  $\cP$  is  obtained by isometrically identifying two
squares    with    the    side    $1/2$    by   the   boundary,   see
Figure~\ref{fig:square:pillow}.

Consider  a  connected  ramified cover $\hat\cP$ over $\cP$ of degree
$\Deg$  having  ramification  points  only  over  the  corners of the
pillowcase.  Clearly,  $\hat\cP$  is  tiled by $2\Deg$ squares of the
size $(1/2)\times(1/2)$ in such way that the squares do not superpose
and  the vertices are glued to the vertices. Coloring the two squares
of the pillowcase $\cP$ one in black and the other in white, we get a
chessboard  coloring of the square tiling of the the cover $\hat\cP$:
the white squares are always glued to the black ones and vice versa.

\begin{lemma}
\label{lm:pillowcase}
Let $S$ be a flat surface in the stratum $\cQ(d_1,\dots,d_k)$.
The following properties are equivalent:
\begin{enumerate}
\item
The surface $S$ represents a lattice point in $\cQ(d_1,\dots,d_k)$;
\item
$S$ is a cover over $\cP$ ramified only over the corners of the pillow;
\item
$S$ is tiled by black and white $(1/2)\times(1/2)$ squares respecting
the chessboard coloring.
\end{enumerate}
\end{lemma}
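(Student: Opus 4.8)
The plan is to prove the cycle of implications $(1)\Rightarrow(2)\Rightarrow(3)\Rightarrow(1)$, treating the passage between the two geometric conditions $(2)$ and $(3)$ as elementary and concentrating the real work on the bridge between the cohomological condition $(1)$ and the geometry, which must be set up on the canonical double cover $\hat S$, since that is where the lattice of \S\ref{sec:subsec:notmalization} lives. The easy implication $(2)\Rightarrow(3)$ goes as follows: the pillowcase $\cP$ carries its tautological tiling by one black and one white $(1/2)\times(1/2)$ square, and a cover $f\colon S\to\cP$ ramified only over the corners is a local isometry away from the corners, so the preimage of each open square is a disjoint union of open $(1/2)\times(1/2)$ squares; coloring each preimage by the color of its image gives a tiling of $S$ in which adjacent squares map to adjacent (hence oppositely colored) squares of $\cP$, i.e.\ a chessboard coloring.

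For $(3)\Rightarrow(1)$ I would use the relation between the cohomological coordinates and the flat holonomy of $\psi$. For the antiinvariant basis cycle $\hat\gamma_i=\tilde\gamma_i-\sigma\tilde\gamma_i$ built in \S\ref{ss:coordiantes:in:stratum} from the arc $[P_i,P_{i+1}]$, where $\sigma$ is the hyperelliptic involution, one has $\int_{\hat\gamma_i}\omega=2\int_{\tilde\gamma_i}\omega$ because $\sigma^\ast\omega=-\omega$. A chessboard tiling places every vertex of $S$ at a half-integer flat coordinate, so each arc holonomy $\int_{\tilde\gamma_i}\omega$ lies in $\tfrac12(\Z\oplus i\Z)$ and every basis period $\int_{\hat\gamma_i}\omega$ therefore lies in $\Z\oplus i\Z$. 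This is exactly the defining condition for $S$ to be a lattice point.

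The heart of the matter is $(1)\Rightarrow(2)$, carried out on $\hat S$. By the lattice normalization discussed in \S\ref{sec:subsec:notmalization}, a lattice point has all \emph{closed} holonomies of $\omega$ in $\Z\oplus i\Z$ (only saddle connections between distinct singularities may be half-integer), so the absolute periods of $\omega$ lie in $\Lambda=\Z\oplus i\Z$ and the map $\hat f\colon\hat S\to\T^2=\cx/\Lambda$, $\hat f(\hat p)=\int_{\hat P_1}^{\hat p}\omega\pmod\Lambda$, is well defined and satisfies $\hat f^\ast(dz)=\omega$. Away from the zeros of $\omega$ this is a local isometry, so $\hat f$ is a branched cover whose branch points are the $\hat P_i$. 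Since $\sigma^\ast\omega=-\omega$ and the base point $\hat P_1$ is $\sigma$-fixed, we get $\hat f\circ\sigma=(-1)\circ\hat f$, so $\hat f$ is equivariant and descends to $f\colon S=\hat S/\sigma\to\T^2/\!\pm=\cP$. Finally each zero $\hat P_i$ has relative period $\int_{\hat P_1}^{\hat P_i}\omega$ in $\tfrac12\Lambda/\Lambda$, i.e.\ it maps to one of the four two-torsion points of $\T^2$, which are precisely the fixed points of $z\mapsto -z$ and hence the points lying over the corners of $\cP$. Therefore every branch point of $f$ lies over a corner, which is condition $(2)$.

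The step I expect to be the main obstacle is the bookkeeping hidden in $(1)\Rightarrow(2)$: one must use the index-$4^{N-1}$ normalization to be sure that a lattice point has genuinely \emph{integer} absolute periods (so that $\hat f$ lands in $\T^2=\cx/\Lambda$ rather than in a finer torus) while its relative periods are exactly the half-integers that steer the zeros of $\omega$ onto the two-torsion points. For the strata $\cQ(1^K,-1^{K+4})$ of primary interest all singularities of $\psi$ are branch points, the relative antiinvariant homology coincides with the absolute one, and this is clean; for a general stratum $\cQ(d_1,\dots,d_k)$ the even-order zeros produce pairs $\hat P^{\pm}$ and non-branch arcs, and these cases need to be checked separately. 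One also has to handle the degenerate stratum in which $\psi$ is already a global square of an Abelian differential, where the canonical double cover is disconnected and the map to $\cP$ must be produced directly from the translation structure.
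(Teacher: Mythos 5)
Your proof is correct and follows the same cycle of implications and the same central idea as the paper: the covering map $S\to\cP$ in $(1)\Rightarrow(2)$ is the developing map obtained by integrating a square root of $q$ modulo $\Z\oplus i\Z$, which the paper builds directly on $S$ (quotienting by $\pm$ to absorb the sign ambiguity of $\sqrt{q}$) and you build equivalently upstairs on $\hat S$ and descend by $\sigma$-equivariance. Your spelled-out $(3)\Rightarrow(1)$ via the basis cycles $\tilde\gamma_i-\sigma\tilde\gamma_i$ is precisely the ``similar consideration'' the paper leaves implicit, and the integrality of absolute periods of $\omega$ that you flag as the delicate point is the same fact the paper itself invokes without proof (its parenthetical remark in \S\ref{sec:subsec:notmalization} that closed saddle connections have integer holonomy); it follows because $\hat S$ is hyperelliptic, so $\sigma$ acts by $-1$ on $H_1(\hat S;\Z)$ and every absolute cycle is already antiinvariant.
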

\begin{proof}
We  have  just proved that (2) implies (3). To prove that (1) implies
(2)  we  define  the following map from $S$ to $\cP$. Fix a zero or a
pole  $P_0$  on  $S$.  For  any  $P\in S$ consider a path $\gamma(P)$
joining  $P_0$  to  $P$  having  no  self-intersections and having no
zeroes or poles inside. The restriction of the quadratic differential
$q$   to   such   $\gamma(P)$   admits  a  well-defined  square  root
$\omega=\pm\sqrt{q}$,  which is a holomorphic form on the interior of
$\gamma$. Define
$$
P\mapsto \left(\int_{\gamma(P)} \omega\mod \Z\oplus i\Z\right)/\pm\,.
$$
Of  course,  the  path  $\gamma(P)$ is not uniquely defined. However,
since  the  flat  surface  $S$  represents  a  lattice point (see the
definition in \S\ref{sec:subsec:notmalization}), the difference
of  the  integrals  of $\omega$ over any two such paths $\gamma_1(P)$
and  $\gamma_2(P)$  belongs to $\Z\oplus i\Z$, so taking the quotient
over the integer lattice and over $\pm$ we get a well-defined map. By
definition  of  the pillowcase $\cP$ we have, $\cP=\left(\C{}\!\!\mod
\Z\oplus  i\Z\right)/\pm$.  Thus, we have defined a map $S\to\cP$. It
follows  from the definition of the map, that it is a ramified cover,
and  that  all  regular  points  of  the flat surface $S$ are regular
points  of  the cover. Thus, all ramification points are located over
the corners of the pillowcase.

A similar consideration shows that (3) implies (1).
\end{proof}

Let $\operatorname{Sq}_\Deg(d_1,\dots,d_k)$ be the number of surfaces
in  the  stratum $\cQ(d_1,\dots,d_k)$ tiled with at most $\Deg$ black
and   $\Deg$   white  squares  respecting  the  chessboard  coloring.
Lemma~\ref{lm:pillowcase}          allows          to         rewrite
formula~\eqref{eq:lattice:points} as follows:
$$
\mu(C(\cQ_1(d_1,\dots,d_k))=
\lim_{\Deg\to+\infty}
\Deg^{-\dim_{\C{}} \cQ(d_1,\dots,d_k)}\cdot
\operatorname{Sq}_\Deg(d_1,\dots,d_k)
\,.
$$
Taking into consideration~\eqref{eq:normalization} we get
\begin{multline}
\label{eq:Vol:through:square:tiled}
\Vol\cQ_1(d_1,\dots,d_k)=2\dim_\cx\cQ(d_1,\dots,d_k)\,\cdot
\\
\lim_{\Deg\to+\infty}
\Deg^{-\dim_{\C{}} \cQ(d_1,\dots,d_k)}\cdot
\operatorname{Sq}_\Deg(d_1,\dots,d_k)
\,.
\end{multline}

\subsection{Local Polynomials}
\label{sec:subsec:jenkins}

In  order  to  count pillowcase covers we note that if $\hat\cP$ is a
square-tiled  pillowcase  cover,  it can be decomposed into cylinders
with  integer widths, with zeros and poles lying on the boundaries of
these  cylinders.  We  call  these boundaries singular layers. We can
form an associated graph whose vertices are singular layers and edges
are  cylinders.  For  a  pillowcase  cover $\hat\cP$ in $\cQ( 1^\noz,
(-1)^{\noz+4})$  the associated graph will be a tree, since $\hat\cP$
is   a   sphere.  Figure~\ref{fig:global:graph:and:layers}  gives  an
example of such a tree.

Suppose  there  are $k$ cylinders of width $w_1,\dots w_k$ and height
$h_1,\dots,h_k$  respectively. Since $\hat\cP$ is a sphere, there are
$k+1$  singular layers in the decomposition of $\hat\cP$. Fix the way
in  which  our \textit{labelled} (\textit{named}) zeroes and poles are
distributed  through  singular  layers  (vertices  of the global tree
$\Tree$).

\begin{lemma}
\label{lm:number:of:pillowcase:covers:of:given:type}
The  total  number of pillowcase covers of degree at most $\Deg$ with
a decomposition of a fixed type can be written as
\begin{equation}
\label{eq:k:layer}
2^k
\sum_{\substack{w\cdot h \le\Deg\\
w_i,h_i\in\N}}
w_1\cdot w_2\dots \cdot w_k\cdot
\prod_{i=1}^{k+1} F_i(w_1, \ldots, w_k)\,,
\end{equation}
where  $F_j$  is a function counting the number of ways the cylinders
of width $w_i$ can be glued at vertex $j$
\end{lemma}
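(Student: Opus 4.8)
The plan is to exhibit an explicit bijection between the pillowcase covers of the prescribed \emph{type} and the tuples consisting of a metric datum --- the integer widths $w_1,\dots,w_k$ and heights $h_1,\dots,h_k$ of the $k$ cylinders --- together with, at each of the $k+1$ singular layers, a way of gluing the incident cylinder boundaries into the prescribed ribbon graph, and a twist for each cylinder. First I would recall that for a square-tiled surface the horizontal cylinder decomposition is canonical: the maximal horizontal cylinders, their widths and heights, and the ribbon graphs carried by the singular layers are all recovered from the flat structure. Fixing the type means fixing the tree $\Tree$ together with the assignment of the named zeroes and poles to its vertices, hence fixing the numbers $(m_j,n_j)$ of zeroes and poles on each layer $j$ and which cylinders are incident to each layer. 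Conversely, from the metric datum, a gluing at each layer, and a twist for each cylinder, one rebuilds a unique cover; since the zeroes and poles are labelled, the reconstructed cover has no nontrivial automorphisms, so each cover arises exactly once and the count is the number of such tuples.

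Next I would separate the three independent contributions. For the degree constraint: a cover of degree at most $\Deg$ is tiled by at most $2\Deg$ of the $1/2\times1/2$ squares, and the chessboard coloring forces the waist of each horizontal cylinder to meet an even number $2w_i$ of squares, so cylinder $i$ consists of $2w_ih_i$ squares; summing, $\sum_i 2w_ih_i\le 2\Deg$, i.e. $w\cdot h\le\Deg$, which is precisely the range of summation in~\eqref{eq:k:layer} (this is compatible with Convention~\ref{con:area:1:2}, under which such a cover has area at most $\Deg/2$). For the twist: each cylinder may be independently regrafted after a shift along its waist curve; the waist is a closed saddle connection, so its holonomy is the integer circumference $w_i$, but a saddle connection crossing the cylinder may be half-integer, so the admissible shifts form the set $\{0,\tfrac12,1,\dots,w_i-\tfrac12\}$ of cardinality $2w_i$. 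Multiplying over the $k$ cylinders gives $\prod_{i=1}^k(2w_i)=2^k\,w_1\cdots w_k$, which supplies both the prefactor $2^k$ and the monomial $w_1\cdots w_k$ in~\eqref{eq:k:layer}.

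The gluing contribution is where the tree structure is used. Once the widths $w_1,\dots,w_k$ are fixed, I would argue that the gluing problems at the various layers are mutually independent: each cylinder presents one boundary circle at each of its two endpoints, and the way that circle is cut and attached to the trivalent and univalent vertices of a given layer is a local operation that communicates with the rest of the surface only through the already-fixed width of that cylinder. Hence the number of gluings realizing the prescribed data $(m_j,n_j)$ at layer $j$ is exactly the function $F_j(w_1,\dots,w_k)$ of Theorem~\ref{theorem:local:poly} (which in fact depends only on the widths of the cylinders incident to $j$), and the simultaneous gluings at all layers are counted by the product $\prod_{j=1}^{k+1}F_j(w_1,\dots,w_k)$. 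Assembling the degree constraint on $(w,h)$, the twist factor $2^k w_1\cdots w_k$, and the gluing factor $\prod_j F_j$ yields formula~\eqref{eq:k:layer}.

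The main obstacle I anticipate is making the bijection genuinely exact rather than heuristic. Two points need care. First, one must check that the twist datum is truly independent of the gluing data, so that the factor $2w_i$ is neither already absorbed into $F_j$ at the two ends of the cylinder nor overcounted by a residual rotational symmetry; this is the delicate bookkeeping of the half-integer lattice. Second, one must verify that fixing the common width of a connecting cylinder really decouples the gluings at its two endpoints, with no hidden compatibility condition propagating across the tree. Confirming in addition that the labelling of the singularities trivializes the automorphism group --- so that the naive product is neither divided nor multiplied by any symmetry factor --- completes the argument.
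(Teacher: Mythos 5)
Your argument is correct and follows essentially the same route as the paper's proof: the factor $2^k w_1\cdots w_k$ from the $2w_i$ half-integer twists per cylinder, the conversion of the area bound with half-integer heights into $w\cdot h\le\Deg$ with integer $h_i$, and the independence of the local gluing counts $F_j$ once the widths are fixed. The paper states these three points more tersely, but the substance is identical.
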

Here $F_j$ depends only on the widths $w_{i_j}$ associated
to edges adjacent to vertex $j$.
\begin{proof}
Every  cylinder  is  determined  by  the  following parameters: by an
integer   perimeter   (length   of  the  waist  curve)  $w_i$;  by  a
half-integer  hight  $h_i$  and  by a half-integer twist $t_i$, where
$0\le  t_i\le  w_i$.  Thus, there are $2w_i$ choices for the value of
the  twist  $t_i$,  which  explains  the factor $(2w_1) (2w_2) \ldots
(2w_k) = 2^k \prod_{i=1}^k w_i$.

The  restriction  on the area $\sum h_i\cdot w_i\le N/2$ with integer
$w_i$   and   \textit{half-integer}   $h_i$   is  equivalent  to  the
restriction   $\sum   h_i\cdot  w_i\le  N$  with  integer  $w_i$  and
\textit{integer} $h_i$.
\end{proof}

Our  current  goal  is  to  show  that up to terms of lower order the
counting  function  $F_{m,n}$  associated  to a layer with $m$ simple
zeros   and   $n$  first  order  poles,  is  the  explicit  symmetric
polynomial~\eqref{eq:local:poly}.  We  emphasize  that  the zeros and
poles are \textit{labelled}.


The  neighborhood of a singular layer with $m$ zeros and $n$ poles is
a  metric  ribbon  graph  with  $m$  trivalent vertices (representing
zeroes),  $n$ univalent vertices (representing simple poles) and with
$l$               boundary               components,              see
Figure~\ref{fig:global:graph:and:layers}.  The  width  $w_i$  of each
boundary  component  is  given by the sum of the lengths of the edges
lying  on the boundary. Thus, given a collection of integer widths of
cylinders, the counting problem can be restated as finding the number
of  graphs with half-integer edge lengths yielding these widths. This
is  a  system  of  linear  equations, and the number of half-integral
solutions  is  equal to the volume of the space of all real solutions
for the edge lengths.

Note that the neighborhood of a singular layer with $m$ zeros and $n$
poles can be also viewed as is a topological sphere with $n+m$ marked
points  and with $l$ punctures. This sphere is endowed with a complex
structure;    the   corresponding   $\CP$   carries   a   meromorphic
Jenkins--Strebel  differential  having  $m$  simple zeros, $n$ simple
poles,  and  $l$  double  poles  (which  are  \textit{not}  poles  of
$\hat\cP$)  corresponding  to  $l$  cylinders  of widths $w_i, i = 1,
\ldots  l$.  The number of cylinders $l$ is specified by the relation
$m - n -2l = -4$, that is,
$$
l = \frac{(m-n)}{2} +2\,.
$$
By~\cite{Strebel},  there  is a bijective correspondence between such
Jenkins--Strebel differentials and metric ribbon graphs on the sphere
with  $m$  trivalent  and  $n$  univalent  vertices.  To  count these
differentials, we follow an approach of Kontsevich~\cite{Kontsevich}.

Given  a  ribbon  graph  on  the  sphere  with  $m$ trivalent and $n$
univalent  vertices, we have $v = m+n$, $e = (3m+n)/2$, where $e$ and
$v$  are  the  number of edges and vertices respectively. Letting $f$
denote  the  number  of  faces  (i.e, complementary regions), we have
$v-e+f=2$,  so $(n-m)/2 + f =2$, i.e., $2f = 4+m-n$. This imposes the
restriction  that  $m-n  \in  2\Z$ and that $m-n > -4$. Also, we have
$e-f=  v-2$,  which  suggests our polynomial should be a degree $v-2$
polynomial in $f$ variables.


\addtocounter{subsection}{1}
\label{subsubsec:example}
\addcontentsline{toc}{subsection}{\thesubsection.\quad Example: direct computation of $F_{2,2}$}

\bigskip\noindent
\thesubsection.~\textbf{Example: direct computation of $\pmb{F_{2,2}}$.}
Let  us  explicitly  compute the~local polynomial $F_{2,2}(w_1,w_2)$.
The  list  of  connected ribbon graphs having two vertices of valence
$3$  and  two  vertices of valence $1$ with \textit{labelled boundary
components}   is   presented   at   Figure~\ref{fig:graph1}.   Note  that
interchanging the labelling of the boundary components for the ribbon
graphs  $\Gamma_{1,1}$  and  $\Gamma_{2,1}$  we  get different ribbon
graphs   $\Gamma_{1,2}$  and  $\Gamma_{2,2}$  correspondingly,  while
changing the labelling of the boundary components of the ribbon graph
$\Gamma_3$ we get an isomorphic ribbon graph.

Note,  that  since  our  ribbon graphs represent singular layers on a
topological  sphere,  they  are  always  planar,  i.e.,  they  can be
embedded into a plane.

Consider,  for  example, the graph $\Gamma_{1,1}$ on top on the left.
The  widths  of  the cylinders are given by $w_1 = l_1 + l_2$, and by
$w_2  =  l_1 + l_2 + 2l_3 + 2l_4$, so $\Gamma_1$ is realizable if and
only  if  $w_1<w_2$.  Given  $w_1<w_2$  there are $2w_1$ half-integer
positive  solutions $l_1, l_2$ of equation $w_1 = l_1 + l_2$, and for
each  such solution there are $w_2-w_1$ half-integer solutions of the
equation  $w_2  =  l_1  +  l_2  +  2l_3  +  2l_4$.  Thus,  the impact
$F_{\Gamma_{1,1}}(w_1,w_2)$ of $\Gamma_{1,1}$ to the local polynomial
$F_{2,2}(w_1,w_2)$ has the form
$$
F_{\Gamma_{1,1}}(w_1,w_2):=\begin{cases}
0&\text{when }w_1\ge w_2\\
2w_1(w_2-w_1)&\text{when }w_1<w_2
\end{cases}\,.
$$

Note  that  the  number of quadruples of positive half-integers $l_1,
l_2,  l_3,  l_4$ satisfying the above equations, can be viewed as the
volume  of  the  associated  region of solutions in the positive cone
$\R_{>0}^{4}$.       Consider       the       Laplace       transform
$\widehat{F}_{\Gamma_{1,1}}(\lambda_1,   \lambda_2)   =   \int_{\R^2}
e^{-\lambda\cdot    w}    F_{\Gamma_{1,1}}(w_1,   w_2)   dw$.   Since
$F_{\Gamma_{1,1}}(w_1,  w_2) = 0$ for $w_1, w_2 <0$ and for $w_1 \geq
w_2$,  and since $w_1 = l_1 + l_2$, $w_2 = l_1 +l_2 + 2l_3 +2l_4$, we
obtain
\begin{multline}
\label{eq:example}
\widehat{F}_{\Gamma_{1,1}}(\lambda_1, \lambda_2)
=
2^3\cdot\int_{l_1, l_2, l_3, l_4 >0}
e^{-\lambda_1(l_1 +l_2)}e^{-\lambda_2(l_1+l_2 + 2l_3 +2l_4)}
dl_1dl_2dl_3dl_4 \\
=
\frac{1}{2\cdot}
\frac{2}{\lambda_1 +\lambda_2}\cdot
\frac{2}{\lambda_1+\lambda_2}\cdot
\frac{2}{2\lambda_2}\cdot
\frac{2}{2\lambda_2}\,,
\end{multline}
where the factor
$$
2^{m+n-1}=2^3
$$
in  front  of the integral comes from the normalization of the volume
element  in  cohomological  coordinates. This coefficient can also be
seen,  in  general, as follows: we have $e= \frac{3m+n}{2}$ edges and
$f=  \left(\frac{m-n}{2}  + 2\right)$ faces (adjacent cylinders). The
latter  give  relations  between  edge lengths; the difference is our
dimension  $$\frac{3m+n}{2}  -  \left(\frac{m-n}{2}+2\right)=m+n-2.$$
However,  in  the  parity count the relations are not independent. If
all  edge  lengths  are half-integer, and all perimeters of cylinders
but  one  are  integer, the last perimeter is \emph{automatically} an
integer.  To  see  this  compute the sum of the lengths of perimeters
with  natural  signs. If all edge lengths are half-integer, all edges
which  separate different cylinders get cancelled in this sum and all
other  edges  are  counted  with  a factor of 2. Thus, the sum of the
residues  is integer. This implies that if all perimeters but one are
integer,  the  last  one  is automatically an integer, and we go from
$(m+n-2)$ to $(m+n-1)$.

The      expressions      for      $F_{\Gamma_{1,2}}$     and     for
$\hat{F}_{\Gamma_{1,2}}$     are     symmetric     to    those    for
$F_{\Gamma_{1,1}}$ and $\hat{F}_{\Gamma_{1,1}}$ respectively. Similar
calculations provide the following answers for the remaining graphs:
$$
F_{\Gamma_{2,1}}(w_1,w_2):=\begin{cases}
0&\text{when }w_1\ge w_2\\
\frac{(w_2-w_1)^2}{2}&\text{when }w_1<w_2
\end{cases}\qquad
\widehat{F}_{2,1}(\lambda_1,\lambda_2)=
\frac{1}{2}\frac{2}{\lambda_1+\lambda_2}\left(\frac{1}{\lambda_2}\right)^3
\,.
$$
The      expressions      for      $F_{\Gamma_{2,2}}$     and     for
$\hat{F}_{\Gamma_{2,2}}$     are     symmetric     to    those    for
$F_{\Gamma_{2,1}}$    and    $\hat{F}_{\Gamma_{2,1}}$   respectively.
Finally,
$$
F_{\Gamma_3}(w_1,w_2):=\begin{cases}
w_2^2&\text{when }w_1\ge w_2\\
w_1^2&\text{when }w_1<w_2
\end{cases}
\qquad
\widehat{F}_{\Gamma_3}(\lambda_1,\lambda_2)=
\frac{1}{2}\left(\frac{2}{\lambda_1+\lambda_2}\right)^2
\frac{1}{\lambda_1}\frac{1}{\lambda_2}
\,.
$$

There  are $2!\cdot 2!$ ways to give names to $2$ zeroes (i.e. to $2$
trivalent vertices) and to $2$ poles (i.e. to $2$ univalent vertices)
of  the graphs $\Gamma_{2,1}, \Gamma_{2,2}$ and $\Gamma_3$, and there
is  $\frac{1}{2}\cdot  2!\cdot  2!$  ways to give names to zeroes and
poles   of   the   graphs  $\Gamma_{1,1},  \Gamma_{1,2}$.  Thus,  the
contribution of all graphs to $F_{2,2}$ is
\begin{multline*}
F_{2,2}(w_1,w_2)=2!\cdot 2!\left(
\frac{1}{2}\cdot F_{\Gamma_{1,1}}+\frac{1}{2}\cdot F_{\Gamma_{1,2}}+
F_{\Gamma_{2,1}}+F_{\Gamma_{2,2}}+
F_{\Gamma_3}\right)
=\\
4\left(
F_{\Gamma_{1,1}}+
F_{\Gamma_{2,1}}+
F_{\Gamma_3}\right)\quad\text{when }w_1<w_2
=\\
4
\left(w_1(w_2-w_1)+\frac{(w_2-w_1)^2}{2}+w_1^2\right)=
2(w_1^2+w_2^2)\quad\text{when }w_1<w_2\,.
\end{multline*}
Similarly,
\begin{multline*}
F_{2,2}(w_1,w_2)=4\left(
F_{\Gamma_{1,2}}+
F_{\Gamma_{2,2}}+
F_{\Gamma_3}\right)\quad\text{when }w_1>w_2
=\\
4\left(w_2(w_1-w_2)+\frac{(w_1-w_2)^2}{2}+w_2^2\right)=
2(w_1^2+w_2^2)\quad\text{when }w_1>w_2\,.
\end{multline*}
We observe that, though for individual graphs $\Gamma$ the expression
$F_\Gamma(w_1,w_2)$  is  not  symmetric  in  $w_1,w_2$, the total sum
$F_{2,2}(w_1,w_2)$ is a symmetric polynomial in $w_1,w_2$.

Note that formally speaking, we have calculated only the leading term
of  the  local  polynomial neglecting a small correction arising from
degenerate  solutions  when  one or several $l_i$ vanish. A.~Okounkov
and  R.~Pandharipande prove in~\cite{OP} that counting the degenerate
solutions  in  an  appropriate way we get a true symmetric polynomial
$F_{m,n}$  not  only  in the leading term, but exactly. Since for the
purposes of counting the volume we are interested only in the leading
term of the asymptotics, we neglect this subtlety.

We   could   also   compute  $\widehat{F}_{2,2}(\lambda_1,\lambda_2)$
directly. The advantage of this calculation is that we do not need to
follow  the  system of inequalities, which becomes quite involved for
complicated graphs. In our case we get

\begin{multline*}
\widehat{F}_{2,2}(\lambda_1,\lambda_2)=2!\cdot 2!\left(
\frac{1}{2}\cdot \widehat{F}_{\Gamma_{1,1}}+\frac{1}{2}\cdot \widehat{F}_{\Gamma_{1,2}}+
\widehat{F}_{\Gamma_{2,1}}+\widehat{F}_{\Gamma_{2,2}}+
\widehat{F}_{\Gamma_3}\right)
=\\
\left(\frac{2}{\lambda_1+\lambda_2}\right)^2\left(\frac{1}{\lambda_1}\right)^2+
\left(\frac{2}{\lambda_1+\lambda_2}\right)^2\left(\frac{1}{\lambda_2}\right)^2+
2\cdot\frac{2}{\lambda_1+\lambda_2}\left(\frac{1}{\lambda_1}\right)^3+
\\
2\cdot\frac{2}{\lambda_1+\lambda_2}\left(\frac{1}{\lambda_2}\right)^3+
2\cdot\left(\frac{2}{\lambda_1+\lambda_2}\right)^2\frac{1}{\lambda_1}\frac{1}{\lambda_2}
=4\left(
\frac{1}{\lambda_1\lambda_2^3}+\frac{1}{\lambda_2\lambda_1^3}
\right)\,.
\end{multline*}

\subsection{Kontsevich's Theorem}
\label{subsubsec:kontsevich}
Consider  now the general setting. As above, let $\Gamma$ be a ribbon
graph  on the sphere, and let $w_1, \ldots, w_l$ be the widths of the
complementary  regions.  Let  $F_{\Gamma}(w_1,  \ldots,  w_l)$ be the
volume  of  the region in $\R_{>0}^{|e|}$ corresponding to lengths of
edges  so  that  the  sum  of the edges adjacent to the region $i$ is
$w_i$.  Here,  $|e|$  is  the number of edges of $\Gamma$. Taking the
Laplace transform, we define
\begin{equation}
\label{eq:hatf:gamma:def}
\widehat{F}_{\Gamma}(\lambda) =
2^{m+n-1}
\cdot
\prod_{e \in \Gamma}
\frac{1}{\tilde{\lambda}(e)}\,,
\end{equation}
where  the  product  is  taken  over  all  the edges $e$ of the graph
$\Gamma$, and $\tilde{\lambda}(e)$ denotes the sum of the $\lambda$'s
corresponding to width variables associated to regions bordering $e$. Our normalization is as in (\ref{eq:example}).
If  $e$  is  an  edge adjacent to a univalent vertex, then it is only
bordered  by  one  region. Let $F_{m,n}(w_1, \ldots, w_l)$ denote the
total  volume  (that  is,  the  sum  over  all possible ribbon graphs
$\Gamma$  with  $m$  trivalent  and  $n$ univalent vertices), and let
$\widehat{F}_{m,n}$ denote the Laplace transform of $F_{m,n}$.
\begin{equation}
\label{eq:hatf:def}
\widehat{F}_{m,n}(\lambda) =
\sum_{\Gamma}\widehat{F}_{\Gamma} (\lambda) =
2^{m+n-1}
\cdot
\sum_{\Gamma}\prod_{e \in \Gamma}
\frac{1}{\tilde{\lambda}(e)}\,,
\end{equation}
where  the  sums  are taken over all connected ribbon graphs $\Gamma$
having  $m$  trivalent  vertices, $n$ univalent vertices and no other
vertices. We have:
   \medskip
\begin{theorem}~\cite[\S 3.1, page 10]{Kontsevich}
\label{theorem:kontsevich}
Let $m=2k, l=k+2$. Then
$$
\widehat{F}_{m, 0}(\lambda_1, \ldots, \lambda_l) =
2^{k-1} m!  \sum_{\substack{k_1,\dots,k_l\\k_1+\dots+k_l = k-1}}
\binom{k-1}{k_1, \ldots, k_l} \prod_{i=1}^{l}
\frac{(2k_i-1)!!}{\lambda_i^{2k_i +1}}\,,
$$
where   $(2k_i  -1)!!  =  (2k_i  -1)(2k_i  -3)(2k_i  -5)  \ldots  1$.
\end{theorem}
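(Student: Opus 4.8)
The plan is to read the statement as the genus-zero specialization of Kontsevich's main combinatorial identity \cite[\S 3.1]{Kontsevich}, and then to convert his normalization into ours. First I would record the geometry forced by $n=0$: every one of the $m=2k$ vertices is trivalent, the graph $\Gamma$ lies on the sphere, and by the Euler count of \S\ref{sec:subsec:jenkins} it has $v=m=2k$ vertices, $e=\tfrac{3m}{2}=3k$ edges, and $l=k+2$ labelled faces. Kontsevich's identity expresses the Laplace transform of the total volume of the cells of metric ribbon graphs as a generating function of $\psi$-class intersection numbers on the moduli space of curves: for genus $g$ with $l$ labelled faces,
$$
\sum_{\Gamma}\frac{2^{-|V(\Gamma)|}}{|\Aut\Gamma|}\prod_{e\in\Gamma}\frac{2}{\tilde\lambda(e)}
=\sum_{d_1+\dots+d_l=3g-3+l}
\langle \tau_{d_1}\cdots\tau_{d_l}\rangle_g
\prod_{i=1}^l \frac{(2d_i-1)!!}{\lambda_i^{2d_i+1}}\,,
$$
the left-hand sum running over connected trivalent ribbon graphs of genus $g$. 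I would quote this in the exact normalization of \cite[\S 3.1]{Kontsevich}, paying attention to his factor $2^{-|V(\Gamma)|}$ and to the $\tfrac2{\tilde\lambda(e)}$ per edge, both of which differ from our convention in~\eqref{eq:hatf:def}.

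Next I would specialize to $g=0$, which is forced because $\hat\cP$ is a sphere. Then $3g-3+l=l-3=k-1$, and the genus-zero intersection numbers are the classical multinomials
$$
\langle \tau_{d_1}\cdots\tau_{d_l}\rangle_0=\binom{l-3}{d_1,\dots,d_l}=\binom{k-1}{d_1,\dots,d_l}\,,
\qquad d_1+\dots+d_l=k-1\,,
$$
which one may either cite or rederive from the string equation. After renaming $d_i=k_i$ the right-hand side of Kontsevich's identity is verbatim the symmetric sum in the statement, so it only remains to pin down the prefactor $2^{k-1}m!$.

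The final and only substantive step is the normalization bookkeeping. Our $\widehat{F}_{m,0}$ of~\eqref{eq:hatf:def} carries the cohomological prefactor $2^{m+n-1}=2^{m-1}$, uses $\prod_e\tfrac1{\tilde\lambda(e)}$ instead of $\prod_e\tfrac2{\tilde\lambda(e)}$, and, because the zeroes are \emph{named}, sums over fully labelled graphs with no orbifold weight $1/|\Aut\Gamma|$. Since $\tilde\lambda(e)$ depends only on the two faces bordering $e$ and not on the labels of the $m$ vertices, I would pass from Kontsevich's face-labelled classes weighted by $1/|\Aut\Gamma|$ to our fully labelled graphs by multiplying each term by $m!$, and convert the edge weights using $\prod_e\tfrac1{\tilde\lambda}=2^{-e}\prod_e\tfrac2{\tilde\lambda}$. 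Solving Kontsevich's identity for $\sum_\Gamma\tfrac1{|\Aut\Gamma|}\prod_e\tfrac2{\tilde\lambda}=2^{v}\cdot(\text{right-hand side})$ and collecting the three sources of powers of two gives
$$
\widehat{F}_{m,0}=2^{m-1}\cdot m!\cdot 2^{\,v-e}\cdot(\text{right-hand side})
=2^{\,m-1+v-e}\,m!\cdot(\text{right-hand side})=2^{\,k-1}\,m!\cdot(\text{right-hand side})\,,
$$
using $v=m=2k$ and $e=3k$, which is exactly the asserted constant.

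The part I expect to be delicate is precisely this last calibration: one must verify that labelling the $m$ vertices contributes exactly $m!$ (i.e.\ that the face-label-preserving automorphisms act freely on vertex labellings, with the finitely many symmetric graphs accounted for consistently by the orbifold count), and that the three independent factors of two—the cohomological $2^{m-1}$, the per-edge $2^{-e}$, and Kontsevich's $2^{-|V|}$—telescope to $2^{k-1}$ after inserting $e=\tfrac{3m}{2}$ and $v=m$. As a check I would calibrate against $m=2$, where the only graphs are the theta graph and the three handcuffs graphs on three labelled faces, and verify directly that the formula reproduces $\widehat{F}_{2,0}=2/(\lambda_1\lambda_2\lambda_3)$, confirming the constants.
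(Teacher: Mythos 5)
Your proposal is correct and follows essentially the same route as the paper: the paper gives no independent proof of this theorem but cites Kontsevich's main identity and, in the accompanying remark, records exactly the three conversions you carry out, namely replacing the genus-zero intersection numbers $\langle\tau_{k_1}\cdots\tau_{k_l}\rangle$ by the multinomial coefficients (via Witten), inserting the factor $m!$ for labelled zeroes, and adjusting the power of $2$ for the half-integer lattice normalization. Your explicit bookkeeping $2^{m-1+v-e}=2^{k-1}$ and the $m=2$ calibration are consistent with the normalization in~\eqref{eq:hatf:def} and with the value $F_{2,0}=2$ from the paper's table.
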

\noindent
Recall that by convention $(-1)!!=0!=1$.
\smallskip

\noindent  \textbf{Remark:} Kontsevich states this result in terms of
certain                      intersection                     numbers
$\langle\tau_{k_1}\ldots\tau_{k_l}\rangle$  in  place of the binomial
coefficients  $\binom{k-1}{k_1,  \ldots, k_l}$. However, the equality
of  the  two quantities was known to Witten~\cite{Witten}, see, also e.g,~\cite[equation 2.3]{OP5}. There are
also  two  differences  in  normalization- first, we are working with
\emph{labelled}  zeros  (and  later  also  poles),  and  edges, which
eliminates  any  symmetry  group factors and adds the factor of $m!$,
and  we  normalize the half-integer lattice to have volume $1$, which
accounts for the difference in the factor of a power of $2$.

\begin{corollary}
\label{cor:m:equals:zero}
Theorem~\ref{theorem:local:poly}   and  formula~\eqref{eq:local:poly}
are valid for $n=0$:
\begin{equation}
\label{eq:local:poly:n:equals:0}
F_{m,0}(w_1, w_2, \ldots
w_l) = m! \sum_{\sum_{i=1}^{l} k_i = k-1} \binom{k-1}{k_1,
\ldots, k_l} \prod_{i=1}^{l} \frac{w_i^{2k_i}}{k_i!}\,,
\end{equation}
\end{corollary}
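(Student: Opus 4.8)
The plan is to recover $F_{m,0}$ from Theorem~\ref{theorem:kontsevich} by inverting the Laplace transform. Recall that $\widehat{F}_{m,0}$ is by definition the Laplace transform of the polynomial $F_{m,0}$, taken over the positive cone with $\lambda\cdot w = \sum_i \lambda_i w_i$, exactly as in~\eqref{eq:example}. The first observation I would make is that each summand of Kontsevich's expression is \emph{separable}: it is a product over the index $i$ of a function of the single variable $\lambda_i$, namely $(2k_i-1)!!/\lambda_i^{2k_i+1}$. Since the multivariable Laplace transform of a product of one-variable functions is the product of their transforms, the inverse transform of each summand factors accordingly, and the problem reduces to inverting one factor at a time.

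For the one-variable step I would use the elementary identity
$$
\int_0^\infty e^{-\lambda w}\,\frac{w^p}{p!}\,dw = \frac{1}{\lambda^{p+1}}\,,
$$
valid for every nonnegative integer $p$, whose inverse reads: the inverse Laplace transform of $\lambda^{-(p+1)}$ is $w^p/p!$. Applying this with $p = 2k_i$ turns the factor $(2k_i-1)!!/\lambda_i^{2k_i+1}$ into $(2k_i-1)!!\,w_i^{2k_i}/(2k_i)!$.

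The only bookkeeping of substance is the factorial simplification. I would use $(2k_i)! = (2k_i)!!\,(2k_i-1)!! = 2^{k_i}\,k_i!\,(2k_i-1)!!$, so that the double factorials cancel and each factor becomes $w_i^{2k_i}/(2^{k_i}k_i!)$. Taking the product over $i$ and invoking the constraint $\sum_i k_i = k-1$ produces a global factor $\prod_i 2^{-k_i} = 2^{-(k-1)}$, which exactly cancels the prefactor $2^{k-1}$ in Kontsevich's formula. Assembling the pieces yields
$$
F_{m,0}(w_1,\ldots,w_l) = m!\sum_{\sum_{i=1}^l k_i = k-1}\binom{k-1}{k_1,\ldots,k_l}\prod_{i=1}^l \frac{w_i^{2k_i}}{k_i!}\,,
$$
which is~\eqref{eq:local:poly:n:equals:0}.

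It remains to check compatibility with~\eqref{eq:local:poly} at $n=0$, where $\frac{m+n}{2}-1 = k-1$; I would note the pointwise identity $\binom{k-1}{k_1,\ldots,k_l}\big/\prod_i k_i! = \binom{k-1}{k_1,\ldots,k_l}^2\big/(k-1)!$, both sides equaling $(k-1)!/\prod_i (k_i!)^2$, so the two forms of the polynomial agree term by term. The computation is entirely routine; I do not expect any genuine obstacle beyond tracking the powers of $2$ and the double-factorial identity, which is precisely the normalization subtlety already flagged in the Remark following Theorem~\ref{theorem:kontsevich}.
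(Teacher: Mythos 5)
Your proposal is correct and follows essentially the same route as the paper: invert Kontsevich's formula term by term via the one-variable identity that $\lambda^{-(p+1)}$ transforms back to $w^p/p!$, then simplify $(2k_i-1)!!/(2k_i)! = 1/(2^{k_i}k_i!)$ so that $\sum_i k_i = k-1$ cancels the $2^{k-1}$ prefactor. The extra check that~\eqref{eq:local:poly:n:equals:0} agrees with~\eqref{eq:local:poly} at $n=0$ is a harmless addition the paper leaves implicit.
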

\begin{proof}
Taking  Laplace  transforms,  and  noting  that if $F(x) = x^k$, then
$\widehat{F}(x) = k!/\lambda^{k+1}$, we obtain
\begin{equation}
\label{nopoles}
F_{m,0} = m!2^{k-1}\sum_{\sum_{i=1}^{l} k_i = k-1}
\binom{k-1}{k_1, \ldots, k_l} \prod_{i=1}^{l}
\frac{(2k_i-1)!!}{(2k_i)!}w_i^{2k_i}\,.
\end{equation}
The product inside the summation can be simplified using
\begin{eqnarray}
\label{comb}
\frac{(2k_i-1)!!}{(2k_i)!} &=& \frac{(2k_i -1)(2k_i -3)
\ldots 1}{(2k_i)(2k_i -1)(2k_i-2)(2k_i -3) \ldots 1} \nonumber\\ &=&
\frac{1}{(2k_i)(2k_i-2)(2k_i -4)\ldots 2} =
\frac{1}{2^{k_i} {k_i!}}\,.
\end{eqnarray}
Noting  that  $\sum  k_i  =  k-1$  allows  us to cancel the $2^{k-1}$
factor,  yielding~\eqref{eq:local:poly:n:equals:0}  which corresponds
to the case $n=0$ of our main theorem.
\end{proof}

\subsection{Recurrence relations and evaluation of local polynomials}
\label{subsec:recurrense}
To  prove  our  general  formula~\eqref{eq:local:poly}  for arbitrary
local  polynomials $F_{m,n}$, we require another lemma which gives an
induction relating $F_{m+1, n+1}$ to $F_{m, n}$.

\begin{lemma}
\label{induction}
Fix  notation  as  in  Theorem~\ref{theorem:local:poly}. Then for any
nonnegative   $m,n\in\Z$,  not  simultaneously  equal  to  zero,  the
function $F_{m,n}$ is a polynomial in $w_i$ satisfying the relation
$$
F_{m+1,n+1} = 2(m+1)\cdot D (F_{m,n})\,,
$$
where  $  D=\sum_{i=1}^k  D_{w_i}$;  operators  $D_w$  are defined on
monomials  by  $D_w(w^n)=\frac{w^{n+2}}{n+2}$  and  are  extended  to
arbitrary polynomials by linearity.
\end{lemma}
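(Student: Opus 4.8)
The plan is to prove the identity after applying the Laplace transform, where it becomes a transparent statement about the graph sum~\eqref{eq:hatf:def}. The first step is to record how $D_w$ acts on the transform side. Since $w^a$ transforms to $a!/\lambda^{a+1}$, a direct check on monomials shows that $D_{w_i}$ corresponds to the operator $-\frac{1}{\lambda_i}\frac{\partial}{\partial\lambda_i}$; hence $D=\sum_i D_{w_i}$ corresponds to $\sum_i\bigl(-\frac{1}{\lambda_i}\frac{\partial}{\partial\lambda_i}\bigr)$. Applying this to $\widehat{F}_{m,n}=2^{m+n-1}\sum_\Gamma\prod_e\frac{1}{\tilde\lambda(e)}$ and using that $\partial_{\lambda_i}\tilde\lambda(e)$ equals the number of sides of $e$ lying on face $i$, I would obtain
\[
\widehat{DF_{m,n}}=2^{m+n-1}\sum_\Gamma\Bigl(\prod_e\tfrac{1}{\tilde\lambda(e)}\Bigr)\sum_e\frac{1}{\tilde\lambda(e)}\Bigl(\frac{1}{\lambda_{i(e)}}+\frac{1}{\lambda_{j(e)}}\Bigr),
\]
where $i(e),j(e)$ are the two faces bordering $e$, the bracket being read as $2/\lambda_i$ when both sides of $e$ lie on a single face $i$ (there $\tilde\lambda(e)=2\lambda_i$).

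The second and main step is a combinatorial correspondence realizing the right-hand side geometrically. Every ribbon graph $\Gamma'$ with $m+1$ trivalent and $n+1$ univalent vertices is produced from a unique graph $\Gamma$ with $m$ trivalent and $n$ univalent vertices by the following move: choose an edge $e$ of $\Gamma$, subdivide it by a new trivalent vertex $V$, and attach to $V$ a new univalent vertex pushed into one of the two faces adjacent to $e$. The inverse move is canonical: in $\Gamma'$ delete the leaf carrying the pole labelled $n+1$ together with its edge, and smooth the resulting bivalent vertex; since $\Gamma'$ is connected this leaf is always attached to a trivalent vertex, so the output $\Gamma$ is well defined. I would then track the effect on the factors. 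Subdividing $e$ replaces $\frac{1}{\tilde\lambda(e)}$ by $\frac{1}{\tilde\lambda(e)^2}$, the two sub-edges still bordering the same pair of faces; a leaf does not separate a face, so the new univalent vertex, pushed into face $i$ or $j$, contributes $\frac{1}{2\lambda_i}$ or $\frac{1}{2\lambda_j}$. Summing over the two faces thus replaces $\frac{1}{\tilde\lambda(e)}$ by $\frac{1}{\tilde\lambda(e)^2}\cdot\frac12\bigl(\frac{1}{\lambda_i}+\frac{1}{\lambda_j}\bigr)$, while the normalization changes from $2^{m+n-1}$ to $2^{(m+1)+(n+1)-1}=4\cdot 2^{m+n-1}$.

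Finally I would assemble the count. The new trivalent vertex may carry any of the $m+1$ zero-labels, and relabelling vertices leaves $\widehat{F}_{\Gamma'}$ unchanged; this produces the factor $m+1$ and makes the move-plus-labelling a bijection onto all labelled graphs $\Gamma'$. Combining the change of normalization, the per-edge replacement summed over the two faces, and the label factor $m+1$, the total contribution of each edge $e$ of each $\Gamma$ to $\widehat{F}_{m+1,n+1}$ is exactly $2(m+1)$ times its contribution to $\widehat{DF_{m,n}}$ displayed above; summing over $e$ and $\Gamma$ gives $\widehat{F}_{m+1,n+1}=2(m+1)\,\widehat{DF_{m,n}}$, and inverting the Laplace transform yields the relation. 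Polynomiality of $F_{m,n}$ then follows by induction from the base case $n=0$ of Corollary~\ref{cor:m:equals:zero}, since $D$ preserves polynomials. I expect the main obstacle to be the bookkeeping of this bijection: checking that deleting the marked pole always lands on a trivalent vertex (using connectedness, with the degenerate $(m,n)=(0,2)$ graph included), that the two boundary arcs of an edge lying in a common face give genuinely distinct \emph{labelled} graphs so the factor $2$ is correct, and that the relabelling of the $m+1$ zeros accounts for the factor $m+1$ with no over- or under-counting.
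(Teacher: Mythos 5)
Your proposal is correct and follows essentially the same route as the paper: pass to Laplace transforms, identify $D_{w_i}$ with $-\frac{1}{\lambda_i}\frac{\partial}{\partial\lambda_i}$, and realize the resulting edge sum by inserting a new trivalent vertex on an edge with a pendant univalent vertex pushed into one of the two adjacent faces, with the factor $2$ from the normalization $2^{m+n-1}\mapsto 2^{m+n+1}$ and the factor $m+1$ from labelling the new zero. The only cosmetic difference is that you sum per edge over its two adjacent faces and spell out the inverse move (deleting the leaf labelled $n+1$), whereas the paper sums per face over its adjacent edges and matches the result against $-\frac{2}{\lambda_j}\frac{\partial}{\partial\lambda_j}\widehat{F}_\Gamma$.
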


\begin{remark}
Note  that  the  number  of variables $l$ does not change, as it only
depends on the difference $m-n$.
\end{remark}

\begin{proof}
In  terms  of  Laplace transforms, the statement of the lemma becomes
\begin{equation}
\label{polestring}
\widehat{F}_{m+1, n+1} =
2(m+1)\cdot
\sum_{i=1}^{l}
\frac{-1}{\lambda_i} \frac{\partial}{\partial \lambda_i}
\widehat{F}_{m,n}\,.
\end{equation}
To  prove  this,  we  proceed  at a graph-by-graph level. Fix a graph
$\Gamma$  with  $m$  trivalent  and  $n$  univalent labelled vertices.
Define $p_{ij}(\Gamma)$ as the number of edges of $\Gamma$ separating
regions  $i$  and $j$. By formula~\eqref{eq:hatf:gamma:def} (see also
the Example in \S\ref{subsubsec:example})
\begin{equation}
\label{eq:f:hat:gamma}
\widehat{F}_{\Gamma} =
2^{m+n-1}
\cdot\prod_{i\le j}
\left(\frac{
1
}{\lambda_i +\lambda_j}\right)^{p_{i,j}}\,.
\end{equation}

Let  $\Gamma_{i,j}$  be  the  graph  with  $m+1$  trivalent and $n+1$
univalent  vertices  formed  by  adding  a new edge in the region $j$
(corresponding to $\lambda_j$), so that the new trivalent vertex lies
on  an  edge  adjacent  to  the regions $i$ and $j$ (corresponding to
$\lambda_i$     and     $\lambda_j$;    possibly    $i=    j$).    By
formula~\eqref{eq:hatf:gamma:def}    (see   also   the   Example   in
\S\ref{subsubsec:example})
$$
\widehat{F}_{\Gamma_{i,j}} =
2^2\cdot\frac{1}{2\lambda_j}\cdot
\frac{1}{\lambda_i +\lambda_j}\cdot
\widehat{F}_{\Gamma}=
\frac{1}{\lambda_j}\cdot
\frac{2}{\lambda_i +\lambda_j}\cdot
\widehat{F}_{\Gamma}\,.
$$

We  may assume that the ``new'' pole (univalent vertex) is located at
the  end  of  the  new  edge. However, there are $m+1$ choices of the
simple  zero  (trivalent  vertex)  at  the other extremity of the new
edge. From now on we will fix the labeling of the vertices of the new
graph, and we multiply the final result by this factor $(m+1)$.

Summing  the above formula over all edges adjacent to the region $j$,
we obtain the contribution $\widehat{F}_{\Gamma}^{(j)}$ associated to
attaching  a new univalent vertex in region $j$. Note, that the edges
having region $j$ on both sides should be counted twice, since we can
attach the new edge on both sides of the original edge, producing two
different graphs. Thus,
$$
\widehat{F}_{\Gamma}^{(j)}=
2p_{jj}(\Gamma)\cdot\frac{1}{\lambda_j^2}\cdot\widehat{F}_\Gamma+
\sum_{i\neq j} p_{ij}(\Gamma)\cdot
\frac{1}{\lambda_j}\cdot\frac{2}{\lambda_i+\lambda_j}\cdot\widehat{F}_\Gamma\,.
$$
Applying                         the                         operator
$\frac{-2}{\lambda_j}\cdot\frac{\partial}{\partial        \lambda_j}$
to~\eqref{eq:f:hat:gamma}  we  obtain  exactly  the  same expression.
Taking  into  consideration  the  factor  $(m+1)$ responsible for the
numbering we prove relation~\eqref{polestring}. Inverting the Laplace
transform and applying Corollary~\ref{cor:m:equals:zero} and explicit
evaluation  $F_{0,2}=F_{1,1}=1$  as  the  base  of the recurrence, we
complete the proof of the statement of the Lemma.
\end{proof}

\begin{proof}[Proof of Theorem~\ref{theorem:local:poly}]
We    first    consider    the    case    $m>n$.    We    know,    by
Corollary~\ref{cor:m:equals:zero},  that
\begin{equation}
\label{base}
F_{m-n,  0} = (m-n)! \sum_{\sum_{i=1}^{l} k_i = k-1} \binom{k-1}{k_1,
\ldots,  k_l}  \prod_{i=1}^{l} \frac{w_i^{2k_i}}{k_i!}\,,
\end{equation}
see~\eqref{eq:local:poly:n:equals:0}, where $k$ and $l$ are as in the
statement  of Theorem~\ref{theorem:kontsevich}. Our result follows by
applying   Lemma~\ref{induction}   $n$  times  to  (\ref{base}),  and
by observing that the operator
$$
\prod_{\sum n_i =n} D_i^{n_i}
$$
transforms      the      term      $\frac{w_i^{2k_i}}{k_i!}$     into
$$
\frac{w_i^{2(k_i+n_i)}}{(2k_i +2n_i)(2k_i
+ 2n_i -2) \ldots (2k_i +2)k_i!} =
\frac{1}{2^{n_i}}\frac{w_i^{2(k_i+n_i)}}{(k_i+n_i)!}\,.
$$
Combining  the  factors  of  $2$, we obtain a $\frac{1}{2^n}$. On the
outside,  we  obtain  the factors $(2m) (2(m-1)) \ldots (2(m-n +1))$,
which,  combined  with  the  $(m-n)!$, yields \mbox{$2^n (m-n)!$}, so
cancelling the $2^n$ factors, we obtain
\begin{equation}
\label{eq:term}
F_{m,n} = m!
\displaystyle\sum_{\substack{\sum_{i=1}^l k_i = k-1\\ \sum_{i=1}^l n_l = n}}
\binom{n}{ n_1, \ldots, n_l}\binom{k-1}{k_1, \ldots, k_l}
\prod_{i=1}^{l} \frac{(w_i)^{2(k_i+n_i)}}{(k_i +n_i)!}\,.
\end{equation}
Rewriting~\eqref{eq:term}  by  multiplying and dividing by the factor
$a_1!a_2!\ldots a_l!$\,, where $a_i=k_i+n_i$, and using the resulting
factors  $a_i!$  to rearrange  multinomial coefficients as products of
binomial coefficients we get
\begin{equation}\label{eq:fmn} F_{m,n} = m!
\displaystyle\sum_{\substack{a_1,\ldots a_l\\\sum a_i = n+k -1}}
\left(\frac{(k-1)!n!}{(a_1!\ldots a_l!)^2}
\displaystyle\sum_{\substack{k_1, \ldots k_l, k_i \le a_i\\ \sum k_i = k-1}}
\prod_{i=1}^l \binom{a_i}{k_i}\right)\displaystyle \prod_{i=1}^{l} w_i^{2a_i}\,.
\end{equation}
For         notational         convenience,         we         define
\begin{equation}
\label{eq:comb}
f(a_1, \ldots, a_l) = \frac{(l-3)!n!}{(a_1!a_2!\ldots a_l!)^2}
\displaystyle\sum_{\substack{k_1, \ldots k_l, k_i \le a_i\\ \sum k_i = k-1}}
\ \prod_{i=1}^l \binom{a_i}{k_i}\,;
\end{equation}
where $l=k+2$, so that
$$
F_{m,n} = m!
\displaystyle\sum_{\substack{a_1,\ldots a_l\\ \sum a_i = n+k -1}}
f(a_1,\ldots, a_l)\displaystyle \prod_{i=1}^{l} w_i^{2a_i}\,.
$$
Multiplying  and  dividing~\eqref{eq:comb}  by the factor $(n+l-3)!,$
and   taking   into  consideration  that  $n+l-3=n+k-1=\sum  a_i$  we
rewrite~\eqref{eq:comb} as
\begin{equation}
\label{eq:comb2}
f(a_1, \ldots, a_l) = \frac{1}{a_1!\ldots a_k!}\frac{\binom{n+l-3}{a_1, \ldots a_l}}{\binom{n+l-3}{n}} \sum_{k_1, \ldots k_l}
\prod_{i=1}^l \binom{a_i}{k_i}\,.
\end{equation}
We have
\begin{equation}
\label{eq:binom}
\sum_{k_1, \ldots k_l}
\prod_{i=1}^l \binom{a_i}{k_i} = {\binom{n+l-3}{n}}
\end{equation}
by  a  classical  combinatorial  argument. Indeed, $\binom{n+l-3}{n}$
represents  the  ways to select a subset of $l-3$ elements from a set
of  size  $n+l-3$. On the other hand, suppose the set of size $n+l-3$
contained elements of $l$ distinct types. To pick $l-3$ elements, one
can  choose $k_1$ of the first kind, up to $k_l$of the $l^{th}$ kind,
with  $\sum  k_i  =  l-3$.  There are $\prod\binom{a_i}{k_i}$ ways of
doing  this.  Summing  over all possible $k_1, \ldots k_l$ with $\sum
k_i    =    l-3$,    we    obtain   $\binom{n+l-3}{n}$.   Simplifying
(\ref{eq:comb2}) using~\eqref{eq:binom}, we obtain
\begin{equation}
\label{eq:coeff:final}
f(a_1, \ldots, a_l) =
\frac{\binom{n+l-3}{a_1, \ldots a_l}}{a_1!\ldots a_k!} =
\frac{1}{(n+l-3)!} \binom{n+l-3}{a_1, \ldots a_l}^2\,.
\end{equation}
Noting that $n+l-3 = (m+n)/2 -1$, we obtain~\eqref{eq:local:poly}.

In the cases $m=n$ and $m=n-2$ a similar argument applied to the base
polynomials $F_{1,1} (w_1, w_2) = 1$ and $F_{0,2}(w) = 1$ yields:
\begin{equation}
F_{m,n}=\begin{cases}
\displaystyle m \sum_{i=0}^{m-1}\binom{m-1}{i}^2
w_1^{2i}w_2^{2(m-1-i)} & m=n\\ \\
w^{2m} & m = n-2,
\end{cases}
\end{equation}
\end{proof}

\noindent\textbf{Values of $\pmb{F_{m,n}}$ for small $\pmb{m,n}$}.
To  illustrate  the above theorem, we compute the values of $F_{m,n}$
which      are     involved     in     volume     calculations     of
$\cQ(1^\noz,-1^{\noz+4})$     for     $\noz=1,2$     performed     in
\S\ref{subsec:1:5} and \S\ref{subsec:2:6}.
\medskip

\begin{table}[ht]\caption{Values of $F_{m,n}$ for $(m,n)$ small.}

\hspace*{-90pt}
\begin{minipage}[l]{0.10\textwidth}
\begin{tabular}{|c|c|}
\hline
\multicolumn{2}{|c|}{}\\
\multicolumn{2}{|c|}{\text{Valence } 1}\\
\hline&\\
$m,n$  & $F_{m,n}$\\
\hline&\\
$0,2$     & $1$    \\
\hline &\\
$1,3$    & $w^2$  \\
\hline &\\
$2,4$  & $w^4$  \\
\hline&\\
$3,5$  &$ w^6$  \\ \hline
\end{tabular}
\end{minipage}
\hspace{1cm}
\begin{minipage}[c]{0.10\textwidth}
\begin{tabular}{|c|c|}
\hline
\multicolumn{2}{|c|}{}\\
\multicolumn{2}{|c|}{\text{Valence } 2}\\
\hline&\\
$m,n$ &  $F_{m,n}$\\
\hline&\\
$1,1$ & $1$\\
\hline &\\
$2,2$ &  $2(w_1^2+w_2^2)$\\
\hline &\\
$3,3$ & $3(w_1^4 +  4 w_1^2 w_2^2  +  w_2^4)$\\
\hline
\end{tabular}
\end{minipage}
\hspace{3.5cm}
\begin{minipage}[r]{0.05\textwidth}
\begin{tabular}{|c|c|}
\hline
\multicolumn{2}{|c|}{}\\
\multicolumn{2}{|c|}{\text{Valence } 3}\\
\hline&\\
$m,n$ & $F_{m,n}$\\
\hline&\\
$2,0$ &  $2$\\
\hline&\\
$3,1$&$ 6(w_1^2+w_2^2+w_3^2)$\\
\hline
\end{tabular}
\end{minipage}
\end{table}

%

\subsection{Total Sums}
\label{sec:subsec:total}
We first recall the following standard fact:
\begin{lemma}
\label{lemma:zeta:functions}
As $\Deg \to \infty$,
\begin{displaymath}
\sum_{\substack{h \in \N^k,\ w \in \N^k\\
h \cdot w \le \Deg}}
w_1^{a_1+1} \dots w_k^{a_k+1} \sim
\frac{\Deg^{a+2k}}{(a+2k)!}\cdot
\prod_{i=1}^k (a_i+1)! \, \zeta(a_i+2)\,,
\end{displaymath}
where $a_i\in\N$ for $i=1,\dots,k$ and $a=a_1+\dots+a_k$.
\end{lemma}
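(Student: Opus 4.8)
The plan is to collapse the mixed sum over $w$ and $h$ into a single sum of a divisor-type arithmetic function over a simplex, and then to replace that function by its average order, reducing the whole estimate to a Dirichlet integral. The first step is a change of variables: since the summand $\prod_i w_i^{a_i+1}$ does not involve $h$, I would set $m_i = w_ih_i$ for each $i$. For a fixed value $m_i$ of the product, summing over all factorizations $m_i = w_ih_i$ with $w_i,h_i\in\N$ produces
\[
\sum_{d\mid m_i} d^{\,a_i+1} = \sigma_{a_i+1}(m_i),
\qquad \sigma_b(m):=\sum_{d\mid m} d^{b}.
\]
Because the area constraint $w\cdot h = \sum_i w_ih_i \le \Deg$ becomes simply $\sum_i m_i \le \Deg$, the sum in the statement equals
\[
\sum_{\substack{m\in\N^k\\ \sum_i m_i\le\Deg}}\ \prod_{i=1}^k \sigma_{a_i+1}(m_i).
\]

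Next I would invoke the classical average order of the divisor power sum. Writing $\sigma_b(m)=\sum_{de=m}d^b$ and summing over $e$ first gives, for every integer $b\ge 1$,
\[
\Sigma_b(M):=\sum_{m\le M}\sigma_b(m)
=\frac{\zeta(b+1)}{b+1}\,M^{b+1}+O\bigl(M^{b}\log M\bigr),
\]
where the convergence of $\zeta(b+1)=\zeta(a_i+2)$ is guaranteed precisely because $b=a_i+1\ge 1$. Differentiating the leading term shows that the Stieltjes increment satisfies $d\Sigma_{a_i+1}(t)\sim \zeta(a_i+2)\,t^{a_i+1}\,dt$, which is the density I will feed into the next step.

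The heart of the argument is a multidimensional partial (Abel) summation. Viewing the sum over the simplex $\{\sum_i m_i\le\Deg\}$ as a Stieltjes integral against $\prod_i d\Sigma_{a_i+1}(t_i)$ and replacing each increment by its asymptotic density, I would establish
\[
\sum_{\substack{m\in\N^k\\ \sum_i m_i\le\Deg}}\prod_{i=1}^k \sigma_{a_i+1}(m_i)
\sim\Bigl(\prod_{i=1}^k \zeta(a_i+2)\Bigr)
\int_{\substack{t_i\ge 0\\ \sum_i t_i\le\Deg}}\ \prod_{i=1}^k t_i^{\,a_i+1}\,dt.
\]
I would carry this out rigorously by induction on $k$: peel off one coordinate, apply one-dimensional partial summation using the explicit remainder $O(M^{a_i+1}\log M)$ above, and verify that each accumulated error is $o(\Deg^{a+2k})$.

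Finally I would evaluate the Dirichlet integral. Setting $p_i=a_i+2$,
\[
\int_{\substack{t_i\ge 0\\ \sum_i t_i\le\Deg}}\ \prod_{i=1}^k t_i^{\,p_i-1}\,dt
=\frac{\prod_{i=1}^k \Gamma(p_i)}{\Gamma\bigl(1+\sum_i p_i\bigr)}\,\Deg^{\sum_i p_i}
=\frac{\prod_{i=1}^k (a_i+1)!}{(a+2k)!}\,\Deg^{\,a+2k},
\]
since $\sum_i p_i = a+2k$ and $\Gamma(a_i+2)=(a_i+1)!$. Combining this with the previous display yields exactly $\dfrac{\Deg^{a+2k}}{(a+2k)!}\prod_{i=1}^k (a_i+1)!\,\zeta(a_i+2)$. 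I expect the main obstacle to be the partial summation step: one must control that replacing the arithmetic function $\sigma_{a_i+1}$ by its smooth average order contributes only lower-order error across all $k$ coordinates at once, which is precisely what the induction on dimension together with the explicit remainder term in $\Sigma_b$ is designed to handle.
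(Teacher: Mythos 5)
Your proposal is correct, and it takes a genuinely different route from the paper. The paper keeps $w$ and $h$ separate: for each fixed $h\in\N^k$ it substitutes $x_i=w_ih_i/\Deg$ and treats the $w$-sum as a Riemann sum for the Dirichlet integral over the simplex, so that the factor $\prod_i h_i^{-(a_i+2)}$ survives and summing over $h$ produces $\prod_i\zeta(a_i+2)$ at the very end. You instead collapse the pair $(w_i,h_i)$ into $m_i=w_ih_i$ at the outset, recognize the inner sum as the divisor power sum $\sigma_{a_i+1}(m_i)$, and import the classical average order $\sum_{m\le M}\sigma_b(m)=\frac{\zeta(b+1)}{b+1}M^{b+1}+O(M^b\log M)$, after which a coordinate-by-coordinate Abel summation lands on the same Dirichlet integral $\frac{\prod_i\Gamma(a_i+2)}{\Gamma(a+2k+1)}\Deg^{a+2k}$. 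The two computations agree term for term in the end, but yours buys explicit, quantitative error control (the $O(M^b\log M)$ remainder propagated through the induction), whereas the paper's Riemann-sum passage is stated without justification and in particular silently interchanges the $\Deg\to\infty$ limit with the infinite sum over $h$ (which needs a cutoff or dominated-convergence argument). The price is that your argument leans on an external number-theoretic lemma and a somewhat heavier bookkeeping induction, while the paper's version makes the geometric origin of each factor ($\zeta$'s from the heights, the simplex integral from the widths) more transparent. Both are valid proofs of the stated asymptotic.
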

\begin{proof}
Denote  by $\Delta^k$ the simplex $x_1+\dots+x_k\le 1$ in $\R_+^{k}$.
Introducing the variables $x_i:=\frac{w_i h_i}{\Deg}$ we can approximate
the initial sum by the following sum of integrals:
\begin{multline*}
\sum_{h\cdot w\le \Deg} w_1^{a_1+1} \dots w_k^{a_k+1} \sim
\\
\sum_{h\in\N^k}\int_{\Delta^k}
\left(\frac{x_1 \Deg}{h_1}\right)^{a_1+1}\hspace*{-6pt}\dots\left(\frac{x_k \Deg}{h_k}\right)^{a_k+1}
\left(\frac{\Deg}{h_1}dx_1\right)\dots\left(\frac{\Deg}{h_k}dx_k\right)=\\
\Deg^{a+2k}\cdot
\int_{\Delta^k} x_1^{a_1+1}\dots x_k^{a_k+1}\,dx_1\dots dx_k\,\cdot
\sum_{h\in\N^k} \frac{1}{h_1^{a_1+2}}\dots\frac{1}{h_k^{a_k+2}}\,.
\end{multline*}
It remains to note that
$$
\int_{\Delta^k} x_1^{a_1+1}\dots x_k^{a_k+1}\,dx_1\dots dx_k\,=
\frac{(a_1+1)!\dots\, (a_k+1)!}{(a+2k)!}\,.
$$
\end{proof}

The  calculations  of  the  local  polynomials  allow us to obtain an
expression for the number of connected pillowcase covers of degree at
most  $\Deg$  having the ramification points only over the corners of
the   pillowcase   and  having  the  following  ramification  profile
(indicating  the  total  number  of ramification points over the four
corners  of  the  pillowcase  together). The cover has exactly $\noz$
ramification  points  of degree $3$, $\noz+4$ nonramified points; all
remaining  points over the corners of the pillowcase have degree $2$.
Imposing this ramification profile and connectedness of $\hat \cP$ is
equivalent to requiring that $\hat\cP\in\cQ(1^\noz,-1^{\noz+4})$.

As  explained  in \S\ref{sec:subsec:jenkins}, see also
Figure~\ref{fig:global:graph:and:layers}, every such pillowcase cover
defines a ``global tree'' $\Tree$ which edges correspond to cylinders
filled  with  horizontal  periodic  trajectories,  and whose vertices
correspond  to  ``singular  layers''.  We  stress  that a global tree
represents  only  the adjacency of the cylinders to the same singular
layers,  and  have  almost  nothing  in common with the ribbon graphs
considered   in   \S\ref{sec:subsec:jenkins};  numerous  ribbon
graphs might be hidden behind a vertex of the global tree.

Let some horizontal singular layer $v$ contain $m_v$ zeroes and $n_v$
simple  poles.  The  valence $\va_v$ of the vertex of the global tree
$\Tree$   represents   the   number  of  cylinders  adjacent  to  the
corresponding  layer.  In  other  words,  it stands for the number of
boundary  components  of  the ribbon graph corresponding to the layer
(``faces''  in  terminology  of \S\ref{sec:subsec:jenkins}). We
have   seen  in  \S\ref{sec:subsec:jenkins}  that  the  valence
$\va_v$  and the degree $2a_v:=\operatorname{deg} F_{m_v,n_v}$ of the
corresponding local polynomial are related to $m_v$ and $n_v$ as
\begin{equation}
\label{eq:m:n:power:valence}
\left\{\begin{array}{lll}
a_v&=&\cfrac{m_v+n_v}{2}-1\\[-\halfbls]\\
\va_v&=&\cfrac{m_v-n_v}{2}+2
\end{array}
\right.
\qquad\qquad\qquad
\left\{\begin{array}{lll}
m_v&=&a_v+\va_v-1\\
n_v&=&a_v-\va_v+3
\end{array}
\right.\,.
\end{equation}
Since the number $n_v$ is nonnegative, the degree $2a_v$ and the
valence $\va_v$ satisfy the relation
\begin{equation}
\label{eq:power:ge:valence:minus:3}
a_v\ge \va_v-3\quad\text{for any vertex }v\in\Tree\,.
\end{equation}
Also,  since  the  total  number  of  zeroes  and poles is $2\noz+4$,
summing  up  the  expression  for $a_v$ over all vertices of the tree
$\Tree$, we get
\begin{equation}
\label{eq:sum:a}
\sum_{v\in\Tree} a_v = \noz+2-|V(\Tree)|\,,
\end{equation}
where $|V(\Tree)|$ denotes the number of vertices in $\Tree$.

Reciprocally,  given  any  connected  tree  $\Tree$ with at least two
vertices, and any integer $\noz$ satisfying
\begin{equation}
\label{eq:number:of:vertices}
|V(\Tree)|\le\noz+2\,,
\end{equation}
consider  any partition  of the number $\noz+2-|V|$ into nonnegative
integers
$$
a=a_{v_1}+\dots+a_{v_{|V|}}\,,
$$
where  elements $a_v$ of the partition are enumerated by the vertices
of   the   tree   $\Tree$.   If   for   every   $v$  in  $\Tree$  the
inequality~\eqref{eq:power:ge:valence:minus:3}                 holds,
equations~\eqref{eq:m:n:power:valence}  uniquely  determine for every
vertex  $v$ a couple of nonnegative integers $n_v, m_v$ which are not
simultaneously equal to zero. By construction,
$$
\sum_{v\in\Tree} m_v = \noz \qquad\text{and}\qquad \sum_{v\in\Tree} n_v = \noz+4\,.
$$

\begin{definition}
\label{def:decorated}
Given an integer $\noz\in\N$ and a tree $\Tree$ with at least two and at most
$\noz+2$ vertices, by \textit{decoration} of the tree $\Tree$
we call a partition
$
a=a_{v_1}+\dots+a_{v_{|V|}}\,,
$
enumerated   by   the   vertices   of   the   tree   and   satisfying
relations~\eqref{eq:power:ge:valence:minus:3} and~\eqref{eq:sum:a}.
\end{definition}

We  have  just  proved the following Lemma.
\begin{lemma}
A global tree of any pillowcase cover in $\cQ(1^\noz,-1^{\noz+4})$ is
naturally      \textit{decorated}      in      the      sense      of
Definition~\ref{def:decorated}.        Any       decorated       tree
satisfying~\eqref{eq:sum:a}  corresponds  to  some  actual pillowcase
cover in $\cQ(1^\noz,-1^{\noz+4})$.
\end{lemma}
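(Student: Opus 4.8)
The plan is to prove the two assertions separately; both amount to organizing the correspondences already established in \S\ref{sec:subsec:jenkins} and in the discussion preceding the statement. For the forward direction I would take a pillowcase cover $\hat\cP\in\cQ(1^\noz,-1^{\noz+4})$, cut it along the waist curves of its horizontal cylinders, and build the global tree $\Tree$ with one vertex per singular layer and one edge per cylinder; it is a tree because $\hat\cP$ is a topological sphere. At each vertex $v$ I record the number $m_v$ of zeroes and $n_v$ of simple poles on that layer and set $a_v:=\tfrac{m_v+n_v}{2}-1$. The valence $\va_v$ of $v$ equals the number of cylinders adjacent to the layer, hence the number of boundary components of its ribbon graph, so the Euler-characteristic count gives $\va_v=\tfrac{m_v-n_v}{2}+2$; this is the left-hand pair of relations in \eqref{eq:m:n:power:valence}. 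Nonnegativity of $n_v$ is exactly \eqref{eq:power:ge:valence:minus:3}, and summing $a_v$ over all vertices while using $\sum_v m_v=\noz$ and $\sum_v n_v=\noz+4$ produces \eqref{eq:sum:a}. Thus $\Tree$ is decorated in the sense of Definition~\ref{def:decorated}.

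For the converse I would start from a decorated tree and run \eqref{eq:m:n:power:valence} backwards, setting $m_v:=a_v+\va_v-1$ and $n_v:=a_v-\va_v+3$, with $\va_v$ the degree of $v$ in $\Tree$. These are integers of the correct parity; they satisfy $m_v\ge 0$ (since $a_v\ge 0$ and $\va_v\ge 1$), $n_v\ge 0$ (this is precisely \eqref{eq:power:ge:valence:minus:3}), and $m_v+n_v=2a_v+2\ge 2$, so they are never simultaneously zero. The global counts also come out correctly: combining the handshake identity $\sum_v\va_v=2(|V(\Tree)|-1)$ for a tree with \eqref{eq:sum:a} yields $\sum_v m_v=\noz$ and $\sum_v n_v=\noz+4$, matching the stratum $\cQ(1^\noz,-1^{\noz+4})$.

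The step I expect to be the main obstacle is the geometric realization of this combinatorial data as a genuine pillowcase cover. For each vertex $v$ the triple $(m_v,n_v,\va_v)$ with $\va_v=\tfrac{m_v-n_v}{2}+2$ is realized by a connected metric ribbon graph on the sphere having $m_v$ trivalent and $n_v$ univalent vertices and $\va_v$ boundary components; by Strebel's theorem~\cite{Strebel} such a graph is the singular layer of an honest Jenkins--Strebel differential. I would then reassemble the cover by turning each edge of $\Tree$ into a cylinder that joins a boundary component of one layer to a boundary component of the adjacent layer, with cylinder width equal to the common perimeter. The delicate point is to choose positive half-integer edge lengths so that the perimeters match simultaneously along every cylinder; since $\Tree$ has no cycles there are no closed constraints obstructing this, so a compatible choice can be propagated outward from any root. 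The resulting surface is connected because $\Tree$ is connected, and it is a topological sphere because each layer is planar and the adjacency pattern is a tree, so it indeed represents a pillowcase cover in $\cQ(1^\noz,-1^{\noz+4})$.
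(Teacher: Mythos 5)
Your argument is correct and follows essentially the same route as the paper: the lemma is proved there by exactly the combinatorial bookkeeping you carry out, namely inverting the relations \eqref{eq:m:n:power:valence}, checking nonnegativity of $n_v$ via \eqref{eq:power:ge:valence:minus:3}, and verifying $\sum_v m_v=\noz$ and $\sum_v n_v=\noz+4$ from the handshake identity together with \eqref{eq:sum:a}. The geometric realization step, which you rightly flag as the delicate point, is left implicit in the paper (it amounts to the positivity of the counting functions $F_{m_v,n_v}$ for sufficiently large widths, exactly as in your propagation-from-a-root argument), so your treatment is if anything more complete than the paper's.
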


Now  we  are  ready  to  count  the  number  of  pillowcase covers in
$\cQ(1^\noz,-1^{\noz+4})$  of  degree at most $\Deg$ represented by a
given  decorated  tree $(\Tree,a)$. Let $V$ be the set of vertices of
the  tree  $\Tree$,  let  $E$  be  the set of edges of $\Tree$. Since
$\Tree$  is  a  tree  we  have $|E|=|V|-1$. We always assume that the
labellings   of   vertices   and   edges,  that  is,  the  bijections
$V\to\{1,\dots, |V|\}$ and $E\to\{1,\dots, |E|\}$ are fixed.

Recall  that  to  each  edge  $e_j$ of $\Tree$ we associate a pair of
variables   $h_j$  and  $w_j$  which  represent  the  height  of  the
corresponding cylinder and its width (length of the waist curve). The
decoration  associates  a  pair  of nonnegative integers $m_i,n_i$ to
each vertex $v_i$ of the tree; $m_i,n_i$ are not simultaneously equal
to  zero.  We  associate  to  every  vertex  $v$ the local polynomial
$F_{m_i,n_i}(w_{j_1},\dots,w_{j_{\va(v)}})$  where  $\va(v)$  is  the
valence of the vertex $v=v_i$, and indices $\{j_1,\dots,j_{\va(v)}\}$
enumerate the edges $e_{j_1},\dots,e_{j_{\va(v)}}$ adjacent to $v_i$.

Let   $|\operatorname{Aut}(\Tree,a)|$   be  the  cardinality  of  the
automorphism  group  of  the  decorated  tree  $(\Tree,a)$,  and  let
$k:=|E|$  be  the number of the edges of the tree $\Tree$. The number
of  ways to \textit{give names} to $m_i$ zeroes and to $n_i$ poles at
the layer $v_i$, where $i=1,\dots,k+1$, equals
$$
\frac{1}{|\operatorname{Aut}(\Tree,a)|}
\binom{m}{m_1,\dots,m_{k+1}} \binom{n}{n_1,\dots,n_{k+1}}\,, \qquad k=|E|\,.
$$
Hence,   by  Lemma~\ref{lm:number:of:pillowcase:covers:of:given:type}
the   number   of   pillowcase   covers  of  degree  at  most  $\Deg$
corresponding  to  the  decorated  tree  $(\Tree,a)$  is equal to the
following sum
\begin{equation}
\label{eq:contribution}
\sum_{\substack{h \in \N^k,\ w \in \N^k\\
h \cdot w \le \Deg}}
\frac{1}{|\operatorname{Aut}(\Tree,a)|}
\binom{m}{m_1,\dots,m_{k+1}} \binom{n}{n_1,\dots,n_{k+1}}
(2w_1) \ldots (2w_k)\prod_{i=1}^k F_{m_i,n_i}\,,
\end{equation}
where  the  arguments  of $F_{m_i,n_i}(w_{j_1},\dots,w_{j_{\va(v)}})$
correspond  to  edges  $e_{j_1},\dots,e_{j_{\va(v)}}$ adjacent to the
vertex  $v_i$.  Note  that  the definition of the decoration, and the
construction  of the local polynomials $F_{m_i,n_i}$ implies that any
monomial  in  $w_1,\dots,w_k$ of the above sum has total degree equal
to $\dim_{\C{}}\cQ(1^K,-1^{K+4})=2K+2$.

Define the    formal    operation
$$
\cZ\ :\quad
\prod_{i=1}^{k} w_i^{b_i+1} \longmapsto
\frac{2}{(b+2k-1)!} \prod_{i=1}^{k} \big((b_i+1)!\cdot \zeta(b_i+2)\big)\,,
$$
where  $  b = \sum b_i$. For $b+2k=\dim_{\C{}}\cQ(1^K,-1^{K+4})$ this
operation  corresponds  to  the  following sequence of operations. We
first  apply  Lemma~\ref{lemma:zeta:functions}  to the sum $\sum_{h.w
\le      \Deg}     \prod_{i=1}^{k}     w_i^{b_i+1}$     to     obtain
$\cfrac{N^{b+2k}}{(b+2k)!}\prod_{i=1}^{k}  (b_i+1)!\,  \zeta(b_i  + 2)$.
Then,                                                       following
Lemma~\ref{lm:number:of:pillowcase:covers:of:given:type}   we  divide
the  resulting sum by $N^{\dim_{\C{}}\cQ}$ and multiply the result by
$2\dim_{\C{}}\cQ$.

Summing  up  the  contributions~\eqref{eq:contribution} of individual
decorated                       trees,                       applying
Lemmas~\ref{lm:number:of:pillowcase:covers:of:given:type}
and~\ref{lemma:zeta:functions},  and  using  the  notation  $\cZ$  we
obtain $\Vol\cQ(1^K,-1^{K-4})$. On the other hand, by Theorem for the
volumes    $\Vol\cQ(1^K,-1^{K+4})$    stated    at    the    end   of
\S\ref{subsec:counting:pillowcase:covers}
$$
\Vol \cQ_1\left(1^K, -1^{K+4}\right) =
\frac{\pi^{2K+2}}{2^{K-1}}\,.
$$
Comparing the two expressions for the volume, we obtain the following
identity

\begin{theorem}
\label{th:volume:through:pillows}
For any $K\in\N{}$ the following identity holds:
\begin{multline}
\label{eq:pillows:volume}
\frac{\pi^{2K+2}}{2^{K-1}}=
\sum_{k=1}^{K+1}
\sum_{\substack{\text{Connected}\\\text{trees }\Tree\\\text{with }k\text{ edges}}}
\sum_{\substack{\text{Admissible}\\\text{decorations}\\\mathbf{a}\text{ of }\Tree}}
\\
\frac{2^k}{|\operatorname{Aut}(\Tree,a)|}\cdot
\binom{m}{m_1,\dots,m_{k+1}} \binom{n}{n_1,\dots,n_{k+1}}\cdot
\cZ\!\left(w_1 \ldots w_k\prod_{i=1}^k F_{m_i,n_i}\right)\,.
\end{multline}
\end{theorem}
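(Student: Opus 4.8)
The plan is to compute $\Vol\cQ_1(1^\noz,-1^{\noz+4})$ in two independent ways and to equate the results. One side will be the closed form $\pi^{2\noz+2}/2^{\noz-1}$ quoted at the end of \S\ref{subsec:counting:pillowcase:covers} from~\cite{AEZ:pillowcase:covers}; the other will be the geometric lattice-point count, which is exactly the triple sum on the right of~\eqref{eq:pillows:volume}.

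First I would invoke~\eqref{eq:Vol:through:square:tiled} together with Lemma~\ref{lm:pillowcase}, which identify the volume with the leading asymptotics of the number of pillowcase covers in $\cQ(1^\noz,-1^{\noz+4})$ tiled by at most $\Deg$ black and $\Deg$ white squares. I would then organize this count by the combinatorial type of the horizontal cylinder decomposition, that is, by the decorated global tree $(\Tree,a)$ furnished by the decoration Lemma preceding the theorem: every connected cover contributes to exactly one decorated tree, and conversely every admissible decoration in the sense of Definition~\ref{def:decorated} is realized. For a fixed decorated tree the count is given by~\eqref{eq:contribution}, which assembles the twist factor $2^k$ coming from Lemma~\ref{lm:number:of:pillowcase:covers:of:given:type}, the local polynomials $F_{m_i,n_i}$ attached to the vertices, the multinomials $\binom{m}{m_1,\dots,m_{k+1}}\binom{n}{n_1,\dots,n_{k+1}}$ recording the distribution of the \emph{labelled} zeroes and poles among the layers, and the normalization $1/|\operatorname{Aut}(\Tree,a)|$.

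The heart of the argument is the passage to the limit. Summing~\eqref{eq:contribution} over all decorated trees and applying Lemma~\ref{lemma:zeta:functions} term by term replaces each inner sum $\sum_{h\cdot w\le\Deg}\prod_i w_i^{b_i+1}$ by its leading term $\frac{\Deg^{b+2k}}{(b+2k)!}\prod_i (b_i+1)!\,\zeta(b_i+2)$. Here the crucial bookkeeping is a uniform degree count: since $\deg F_{m_i,n_i}=2a_i$ and $\sum_{v}a_v=\noz+1-k$ by~\eqref{eq:sum:a} with $|V(\Tree)|=k+1$, every monomial of $w_1\cdots w_k\prod_i F_{m_i,n_i}$ has total $w$-degree $2\noz+2-k$, so that $b+2k=\dim_\cx\cQ(1^\noz,-1^{\noz+4})=2\noz+2$ for every decorated tree and every monomial (the dependence on $k$ cancels). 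Dividing by $\Deg^{\dim_\cx\cQ}$ and multiplying by $2\dim_\cx\cQ$, as prescribed by~\eqref{eq:Vol:through:square:tiled}, turns the leading term into precisely the operation $\cZ$, because $\frac{2\cdot(2\noz+2)}{(2\noz+2)!}=\frac{2}{(2\noz+1)!}=\frac{2}{(b+2k-1)!}$, matching the definition of $\cZ$. This realizes the right-hand side of~\eqref{eq:pillows:volume} as the value of $\Vol\cQ_1(1^\noz,-1^{\noz+4})$.

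I expect the main obstacle to be purely combinatorial bookkeeping rather than any analytic difficulty: one must check that the factors of the geometric count and of the algebraic expression~\eqref{eq:contribution} agree term by term --- in particular that the twist factor $2^k$, the labelling multinomials, and the factor $1/|\operatorname{Aut}(\Tree,a)|$ combine correctly --- and that the degree identity $b+2k=2\noz+2$ indeed holds for \emph{every} monomial, so that the single normalization $2/(b+2k-1)!$ inside $\cZ$ is legitimate uniformly across all trees. Granting this, equating the geometric value with the closed form $\pi^{2\noz+2}/2^{\noz-1}$ from~\cite{AEZ:pillowcase:covers} gives~\eqref{eq:pillows:volume} and completes the proof.
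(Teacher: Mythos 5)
Your proposal is correct and follows essentially the same route as the paper: identify the volume with the asymptotic pillowcase-cover count via~\eqref{eq:Vol:through:square:tiled}, stratify by decorated global trees using~\eqref{eq:contribution}, pass to the limit with Lemma~\ref{lemma:zeta:functions} (with the uniform degree identity $b+2k=2K+2$ justifying the single normalization in $\cZ$), and equate with the closed form from~\cite{AEZ:pillowcase:covers}. The bookkeeping you flag as the main obstacle is exactly the content of the paper's derivation, and your verification of it is accurate.
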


Below, we illustrate the volume calculations for the strata
$\cQ_1(1^1, (-1)^5)$ and $\cQ_1(1^2, (-1)^6)$.

\subsection{Stratum $\pmb{\cQ(1^1,-1^5)}$}\label{subsec:1:5}

%
Let

$$
c(\Tree,a):=\frac{1}{|\operatorname{Aut}(\Tree,a)|}\cdot
\binom{m}{m_1,\dots,m_{k+1}} \binom{n}{n_1,\dots,n_{k+1}}\,.
$$
For  $K=1$  there  are  only  two trees with at least $2$ and at most
$K+2$ vertices. Each of these trees admits a unique decoration. Thus,
the  sum in the right-hand side of~\eqref{eq:pillows:volume} contains
only two summands described in the table below.

$$
\begin{array}{|c|c|c|c|}
\hline
\text{Tree} &  {\displaystyle\prod_{i=1}^k F_{m_i,n_i}}& c(\Tree,a) &\text{Contribution}\\
\hline
\multicolumn{4}{c}{}\\
\multicolumn{4}{c}{k=1 \text{ cylinder}}\\
\hline&&&\\
\begin{picture}(10,0)(0,-5)
\put(0,0){\circle{4}}
\put(5,0){\tiny 1,3}
\put(0,-2){\line(0,-1){26}}
\put(0,-30){\circle{4}}
\put(5,-32){\tiny 0,2}
\end{picture}
& F_{1,3}(w_1) \cdot F_{0,2}(w_1)=
& 1\cdot \binom{1}{1,0}\binom{5}{3, 2}& 40\cdot\zeta(4)=
\\
&
=w_1^2 \cdot 1
&&=\cfrac{4}{9}\cdot\pi^4\\
&&&\\
\hline
\multicolumn{4}{c}{}\\
\multicolumn{4}{c}{k=2 \text{ cylinders}}\\
\hline&&&\\
\begin{picture}(10,35)(0,-30)
\put(0,0){\circle{4}}
\put(5,0){\tiny 0,2}
\put(0,-2){\line(0,-1){26}}
\put(0,-30){\circle{4}}
\put(5,-32){\tiny 1,1}
\put(0,-32){\line(0,-1){26}}
\put(0,-60){\circle{4}}
\put(5,-62){\tiny 0,2}
\end{picture}
& F_{0,2}(w_1)\cdot F_{1,1}(w_1,w_2) \cdot F_{0,2}(w_2)=
& \frac{1}{2}\cdot\binom{1}{1, 0, 1}\binom{5}{2, 1, 2}&20\cdot\zeta^2(2)=\\
&
=1\cdot 1\cdot 1
&&\cfrac{5}{9}\cdot\pi^4\\
&&&\\
\hline
\end{array}
$$
\bigskip

\noindent Adding the two terms we get the following value for the volume (recall all zeroes and
poles are numbered):
$$
\Vol\cQ_1(1,-1^5) = \pi^4
$$
Up to the factor $120=5!$ coming from enumeration of simple poles
it matches the value
$$
\Vol\cH_1(2)=\cfrac{\pi^4}{120}
$$
from~\cite{Eskin:Masur:Zorich},  where $\cH_1(2)$ denotes the stratum
of  unit-area  Abelian  differentials with one double zero on a genus
$2$  surface.  The  two  values  agree  since we have the isomorphism
$\cH(2)  \cong \cQ(1,-1^5)$ by taking the quotient of each surface in
$\cH(2)$ by the hyperelliptic involution.

\subsection{Stratum $\pmb{\cQ(1^2,-1^6)}$}\label{subsec:2:6}

%
   %
For $K=2$ the tree can contain from one to three edges; corresponding
decorated  trees  and  their  contributions  to  the  right-hand side
of~\eqref{eq:pillows:volume} are presented in the table below.

$$
\begin{array}{|c|c|c|c|}
\hline
\text{Tree} &  {\displaystyle\prod_{i=1}^k F_{m_i,n_i}}& c(\Tree,a) &\text{Contribution}\\
\hline
\multicolumn{4}{c}{}\\
\multicolumn{4}{c}{k=1 \text{ cylinder}}\\
\hline&&&\\
\begin{picture}(10,0)(0,-5)
\put(0,0){\circle{4}}
\put(5,0){\tiny 2,4}
\put(0,-2){\line(0,-1){26}}
\put(0,-30){\circle{4}}
\put(5,-32){\tiny 0,2}
\end{picture}
& F_{2,4}(w_1) \cdot F_{0,2}(w_1)=
&1\cdot \binom{2}{2, 0}\binom{6}{4, 2}&60\cdot\zeta(6)=
\\
&
=w_1^4\cdot 1
&
&\cfrac{4}{63}\cdot\pi^6
\\
&&&\\
\hline
&&&\\
\begin{picture}(10,0)(0,-5)
\put(0,0){\circle{4}}
\put(5,0){\tiny 1,3}
\put(0,-2){\line(0,-1){26}}
\put(0,-30){\circle{4}}
\put(5,-32){\tiny 1,3}
\end{picture}
& F_{1,3}(w_1) \cdot F_{1,3}(w_1)=
& \frac{1}{2}\cdot\binom{2}{1,1}\binom{6}{3,3}&80\cdot\zeta(6)=
\\
&
=w_1^2\cdot w_1^2
&
&\cfrac{16}{189}\cdot\pi^6\\
&&&\\
\hline\multicolumn{3}{r}{\text{Subtotal:}}
&\multicolumn{1}{c}{\cfrac{4}{27}\cdot\pi^6}\\
\multicolumn{4}{c}{}\\
\multicolumn{4}{c}{k=2 \text{ cylinders}}\\
\hline&&&\\
\begin{picture}(10,35)(0,-30)
\put(0,0){\circle{4}}
\put(5,0){\tiny 0,2}
\put(0,-2){\line(0,-1){26}}
\put(0,-30){\circle{4}}
\put(5,-32){\tiny 2,2}
\put(0,-32){\line(0,-1){26}}
\put(0,-60){\circle{4}}
\put(5,-62){\tiny 0,2}
\end{picture}
& F_{0,2}(w_1) \cdot F_{2,2}(w_1,w_2) \cdot F_{0,2}(w_2)=
& \frac{1}{2}\cdot\binom{2}{0,2,0}\binom{6}{2, 2, 2}&72\cdot\zeta(2)\cdot\zeta(4)=
\\
&
=1\cdot 2(w_1^2+w_2^2)\cdot 1
&
&\cfrac{2}{15}\cdot\pi^6
\\
&&&\\
\hline
&&&\\
\begin{picture}(10,35)(0,-30)
\put(0,0){\circle{4}}
\put(5,0){\tiny 1,3}
\put(0,-2){\line(0,-1){26}}
\put(0,-30){\circle{4}}
\put(5,-32){\tiny 1,1}
\put(0,-32){\line(0,-1){26}}
\put(0,-60){\circle{4}}
\put(5,-62){\tiny 0,2}
\end{picture}
& F_{1,3}(w_1) \cdot F_{1,1}(w_1,w_2)\cdot F_{0,2}(w_2)=
&1\cdot\binom{2}{1, 1,0}\binom{6}{3, 1, 2}&48\cdot\zeta(2)\cdot\zeta(4)=
\\
&
=w_1^2\cdot 1\cdot 1
&
&\cfrac{4}{45}\cdot\pi^6\\
&&&\\
\hline\multicolumn{3}{r}{\text{Subtotal:}}
&\multicolumn{1}{c}{\cfrac{2}{9}\cdot\pi^6}
\end{array}
$$

$$
\begin{array}{|c|c|c|c|}
\hline
\text{Tree} &  {\displaystyle\prod_{i=1}^k F_{m_i,n_i}}& c(\Tree,a) &\text{Contribution}\\
\hline
\multicolumn{4}{c}{}\\
\multicolumn{4}{c}{k=3 \text{ cylinders}}\\
\hline&&&\\
\begin{picture}(10,35)(0,-30)
\put(0,0){\circle{4}}
\put(5,0){\tiny 0,2}
\put(0,-2){\line(0,-1){26}}
\put(0,-30){\circle{4}}
\put(5,-32){\tiny 1,1}
\put(0,-32){\line(0,-1){26}}
\put(0,-60){\circle{4}}
\put(5,-62){\tiny 1,1}
\put(0,-62){\line(0,-1){26}}
\put(0,-90){\circle{4}}
\put(5,-92){\tiny 0,2}
\end{picture}
& F_{0,2}(w_1)\cdot  F_{1,1}(w_1,w_2)\cdot
& \frac{1}{2}\cdot\binom{2}{0, 1, 1, 0}\binom{6}{2, 1, 1, 2}&24\cdot\zeta^3(2)=
\\
&
\cdot F_{1,1}(w_2,w_3)\cdot F_{0,2}(w_3)=
&
&\cfrac{1}{9}\cdot\pi^6
\\
&=1\cdot 1\cdot 1 \cdot 1&&\\
&&&\\
&&&\\
&&&\\
\hline
&&&\\
\begin{picture}(70,15)(-35,-10)
\put(0,0){\circle{4}}
\put(5,0){\tiny 0,2}
\put(0,-2){\line(0,-1){26}}
\put(0,-30){\circle{4}}
\put(5,-32){\tiny 2,0}
\put(-1.8,-31){\line(-2,-1){26}}
\put(-29.6,-44.9){\circle{4}}
\put(-35,-54){\tiny 0,2}
\put(1.8,-31){\line(2,-1){26}}
\put(29.6,-44.9){\circle{4}}
\put(25,-54){\tiny 0,2}
\end{picture}
& F_{0,2}(w_1)\cdot F_{2,0}(w_1,w_2,w_3)\cdot
& \frac{1}{6}\cdot
\binom{2}{0, 0, 0, 2}\binom{6}{2,2,2, 0}&4\cdot\zeta^3(2)=
\\
&
\cdot F_{0,2}(w_2)\cdot F_{0,2}(w_3)=
&
&\cfrac{1}{54}\cdot\pi^6\\
&=1\cdot 1\cdot 1\cdot 1&&\\
&&&\\
\hline\multicolumn{3}{r}{\text{Subtotal:}}
&\multicolumn{1}{c}{\cfrac{7}{54}\cdot\pi^6}
\end{array}
$$
\smallskip

Taking          the          total         sum         we         get
$\Vol\cQ(1^2,-1^6)=\left(\cfrac{4}{27}+\cfrac{2}{9}+\cfrac{7}{54}\right)\pi^6=
\cfrac{\pi^6}{2}$.


\end{document}